\newtheorem{prop}{Proposition}[section]
\newtheorem{lem}[prop]{Lemma}
\newtheorem{cor}[prop]{Corollary}
\newtheorem{thm}[prop]{Theorem}
\theoremstyle{definition}
\newtheorem{rem}[prop]{Remark}
\newtheorem{defi}[prop]{Definition}
\newtheorem{ex}[prop]{Example}
\newtheorem*{KN}{Crazy Knight's Tour Problem}
\newtheorem{conjecture}[prop]{Conjecture}
\newtheorem*{conjectureabs}{Conjecture}
\numberwithin{equation}{section}
\newcommand{\probname}{Crazy Knight's Tour Problem}
\newcommand\tikzmark[2]{%
\tikz[remember picture,baseline] \node[inner sep=2pt,outer sep=0] (#1){#2};%
}
\newcommand\link[2]{%
\begin{tikzpicture}[remember picture, overlay, >=stealth, shift={(0,0)}]
  \draw[->] (#1) to (#2);
\end{tikzpicture}%
}
\newcommand{\WidestEntry}{$\bullet$}%
\newcommand{\SetToWidest}[1]{\makebox[\widthof{\WidestEntry}]{$#1$}}%
\begin{document}

\title[A conjecture on the Crazy Knight's Tour Problem]{A conjecture on the Crazy Knight's Tour Problem}

\author[L. Mella]{Lorenzo Mella}
\address{}
\email{lorenzo.mella@unibs.it}

\author[A. Pasotti]{Anita Pasotti}
\address{DICATAM - Sez. Matematica, Universit\`a degli Studi di Brescia, Via Branze 43, I-23123 Brescia, Italy}
\email{anita.pasotti@unibs.it}

\keywords{Knight's tour, toroidal chessboard, embedding.}
\subjclass[2020]{05B20; 05B30; 05C10}

\begin{abstract}
Let $A$ be an $m\times n$ toroidal array containing filled and empty cells. Fix an orientation $R=(r_1,\dots,r_m)$
of each row and an orientation $C=(c_1,\dots,c_n)$ of each column of $A$. Given an initial filled cell $(i_1,j_1)$ consider the list
$ L_{R,C}=((i_1,j_1),(i_2,j_2),\ldots,(i_k,j_k),$ $(i_{k+1},j_{k+1}),\ldots)$
where $j_{k+1}$ is the column index of the filled cell $(i_k,j_{k+1})$ of the row $R_{i_k}$ next to $(i_k,j_k)$ in the orientation $r_{i_k}$,
and where $i_{k+1}$ is the row index of the filled cell of the column $C_{j_{k+1}}$ next to $(i_k,j_{k+1})$ in the orientation $c_{j_{k+1}}$. The problem is the following.
\begin{KN}
Do there exist $R$ and $C$ such that the list $L_{R,C}$ covers all the filled cells of $A$?
\end{KN}
This problem was introduced in [S. Costa, M. Dalai and A. Pasotti, \textit{A tour problem on a toroidal board}, Austral. J. Combin. \textbf{76} (2020), 183--207] to construct new biembeddings of graphs on surfaces starting from an Heffter array.  

Here we provide solution to the Crazy Knight's Tour Problem for infinite classes of \emph{cyclically $k$-diagonal} square arrays, namely square arrays whose filled cells are exactly those of $k$ consecutive diagonals. These new constructions together with some known results induce us to propose the following.
\begin{conjectureabs}
Let $A$ be a cyclically $k$-diagonal square array of order $n$.  Then there exists a solution to the Crazy Knight's Tour Problem on $A$ if and only if $n$ and $k$ are odd integers with $n\geq k \geq3$.
\end{conjectureabs}
\end{abstract}

\maketitle

\section{Introduction}
Tour problems of chess pieces over chessboards have always been an interesting topic in Discrete Mathematics, that have been widely studied over the years due to their vast variety of applications, like Hamiltonian cycles and graph domination, and due to the fact that many combinatorial problems can be translated in this setting, like graph labelings and embeddings, see \cite{W}.

Here, we consider a tour problem introduced in \cite{CDP}, called \textit{Crazy Knight's Tour Problem}, that arises from a particular class of combinatorial arrays, called Heffter arrays, firstly introduced by Archdeacon in \cite{A}, and then widely studied in successive works. A solution to this tour problem allows the construction of $2$-face colorable embeddings of complete graphs on orientable surfaces, see \cite{CDP} for details. As shown below, the tour problem can be defined more in general on the filled cells  of any array, while the content of each cell is not relevant for the problem.  For this reason, in this work we are not going to report notions and results on Heffter arrays, and we address the interested reader to the survey \cite{PD23}.

In what follows, we consider a chessboard that might have some holes, that we represent as an array where some cells are allowed to be empty, i.e., a partially filled array (the empty cells of the array are colored in gray). Moreover, to avoid trivial cases, we assume that every row and every column of an array contains at least one filled cell. Finally, unless explicitely stated, every array that we consider is toroidal, hence if we have an $m \times n$ array the row and column  indexes of its filled cells are written with their residue class modulo $m$ and modulo $n$, respectively. 

Given two positive integers $a$ and $b$ with $a\leq b$, we denote by $[a,b]$ the set of integers $\{a, a+1, \dotsc, b-1, b\}$. For an $m \times n$ array $A$, let $F(A)$ be the set of pairs $(i,j)$, with $i \in [1,m]$ and $j \in [1,n]$, such that the $(i,j)$-th cell of $A$ is filled. Then, a \textit{move function} $\phi$ over $A$ is a permutation of $F(A)$, also, 
given $(i,j) \in F(A)$, we say that its \textit{orbit} is the cycle of $\phi$ that contains it. The problem of finding a move function $\phi$ over $A$ that is a permutation of length $|F(A)|$ is denoted by $T(A)$, in this case, we say that $\phi$ is a \textit{solution} to $T(A)$.
If a move function is named (e.g. bishop), we may refer to it omitting ``move function" for brevity.
More in general, we also use the term move function when considering a permutation over a set, especially if the set has a natural association with a set of cells of an array.

Now, given  $(i,j)\in F(A)$, we define the row successor $s_r((i,j))$ to be $(i,j+k)$ where $k\geq 1$ is the minimum integer such that $(i,j+k)\in F(A)$.
Similarly we define the column successor $s_c((i,j))$ to be $(i+k,j)$ where $k\geq1$ is the minimum integer such that $(i+k,j)\in F(A)$.
Given two vectors $R=(r_1,\dots,r_m)\in \{-1,1\}^m$ and $C=(c_1,\dots,c_n)\in \{-1,1\}^n$, we denote $S_{\tiny{R,C}}: F(A)\rightarrow F(A)$ to be the \emph{Crazy Knight's} move function, defined as follows:
$$
S_{R,C}((i,j)):=
\begin{cases}
 s_c(s_r((i,j))) \mbox{ if } r_i=1 \mbox{ and } c_{j'}=1 \mbox{ where } s_r((i,j))= (i,j');\\
 s_c(s_r^{-1}((i,j))) \mbox{ if } r_i=-1 \mbox{ and } c_{j'}=1 \mbox{ where } s_r^{-1}((i,j))= (i,j');\\
 s_c^{-1}(s_r((i,j))) \mbox{ if } r_i=1 \mbox{ and } c_{j'}=-1 \mbox{ where } s_r((i,j))= (i,j');\\
 s_c^{-1}(s_r^{-1}((i,j))) \mbox{ if } r_i=-1 \mbox{ and } c_{j'}=-1 \mbox{ where } s_r^{-1}((i,j))= (i,j').\\
\end{cases}
$$
As explained in \cite{CDP}, one can see the vector $R$ as a choice of the direction of each row: from left to right if $r_i=1$ and from right to left if $r_i=-1$.
Similarly, the vector $C$ can be seen as a choice of the direction of each column: from top to bottom if $c_i=1$ and in the reverse way if $c_i=-1$.
In other words, one can give the following interpretation to  $S_{R,C}((i,j))$: from the position $(i,j)$ we move first in the $i$-th row following the direction of $r_i$ and then, from the arrival position $(i,j')$, we move in the $j'$-th column in direction of $c_{j'}$ arriving in the position $S_{R,C}((i,j))$.
For any $(i,j)\in F(A)$, let $$L(i,j):=((i,j),S_{R,C}((i,j)), S_{R,C}^2((i,j)),\dots, S_{R,C}^{p}((i,j))),$$
where $p$ is the minimum positive integer such that $S_{R,C}^{p+1}((i,j))=(i,j)$,
namely $L(i,j)$ is the orbit of $(i,j)$ with respect to $S_{R,C}$. It is natural to ask whether, in this way, we can cover all the filled positions or not.

\begin{KN}\label{Arco}
Given a partially filled $m\times n$ array $A$, determine if there exist $R\in \{-1,1\}^m$ and $C\in \{-1,1\}^n$ such that for any $(i,j) \in F(A)$,  $L(i,j)$ covers $F(A)$.
\end{KN}
By $P(A)$ we will denote the \probname\ for a given array $A$.
Also, given a filled cell $(i,j)$, if $L(i,j)$ covers all the filled positions of $A$ we will say that the vectors $R$ and $C$ are a solution to $P(A)$.

In Section \ref{sec:prima} we report 
the main known results on the Crazy Knight's Tour Problem  for  cyclically diagonal arrays, namely square arrays whose filled cells are those of consecutive diagonals, and we propose a conjecture. In Section \ref{sec:equivalenza} we show that, under some conditions, the problem is equivalent to a tour problem of another generalized knight on a partially filled toroidal array. This equivalence and some auxiliary results proved in Section \ref{sec:aux_lem} are then used in Section \ref{sec:new_sol} to produce new solutions to $P(A)$ for some infinite classes of cyclically $k$-diagonal square arrays. 

Finally, we conclude with some observations and remarks on the new solutions that may be obtained from the results of the previous sections.

\section{A conjecture for cyclically diagonal square arrays} \label{sec:prima}
The Crazy Knight's Tour Problem has been investigated in several papers \cite{CDY,CDP,CMPP, CP, CPP, DM} in order to get new biembeddings arising from a Heffter array satisfying the additional condition of being \emph{globally simple}, a notion introduced in \cite{CMPP}. In order to study this problem one has to take into account the following necessary condition.
\begin{prop}[\cite{CDP}, Corollary 2.8]\label{prop:all_odd}
Let $A$ be a square array. If there exists a solution to $P(A)$, then $|F(A)|$ is odd.
\end{prop}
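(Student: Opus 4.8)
The plan is to argue by the sign (parity) of the permutation $CN_{RC}$. The first step is to factor the move function as $CN_{RC} = \gamma \circ \rho$, where $\rho$ is the row move $\rho(i,j) = s_r^{r_i}(i,j)$ and $\gamma$ is the column move $\gamma(i,j) = s_c^{c_j}(i,j)$; both are permutations of $F(A)$, so $\mathrm{sgn}(CN_{RC}) = \mathrm{sgn}(\gamma)\,\mathrm{sgn}(\rho)$, and it suffices to compute the two factors.

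Next I would determine $\mathrm{sgn}(\rho)$. Because the row move fixes the row index, $\rho$ preserves each row and acts on the $\ell_i$ filled cells of row $i$ as a single $\ell_i$-cycle: the choice $r_i = 1$ versus $r_i = -1$ merely inverts this cycle and leaves its length, hence its sign $(-1)^{\ell_i-1}$, unchanged. Writing $n$ for the common number of rows and columns of the square array and using $\sum_i \ell_i = |F(A)|$, this gives
\[
\mathrm{sgn}(\rho) = \prod_{i=1}^{n} (-1)^{\ell_i - 1} = (-1)^{|F(A)| - n}.
\]
The identical argument applied to columns yields $\mathrm{sgn}(\gamma) = (-1)^{|F(A)| - n}$.

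Combining the two, $\mathrm{sgn}(CN_{RC}) = (-1)^{2(|F(A)| - n)} = 1$, so for a square array $CN_{RC}$ is always an even permutation, independently of the choice of $R$ and $C$. On the other hand, if $CN_{RC}$ solves $P(A)$ it is a single cycle of length $|F(A)|$, whose sign equals $(-1)^{|F(A)| - 1}$. Forcing $(-1)^{|F(A)|-1} = 1$ shows that $|F(A)|$ is odd, which is the claim.

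I expect the only point requiring care to be the verification that the cycle structure, and therefore the sign, of the row and column moves does not depend on the prescribed directions; once this is settled the computation is routine. It is also worth stressing that squareness is genuinely used: for a general $m \times n$ array the same reasoning gives $\mathrm{sgn}(CN_{RC}) = (-1)^{m+n}$, so the odd-cardinality conclusion relies precisely on $m + n$ being even.
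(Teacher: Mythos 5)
Your proof is correct: the factorization $CN_{RC}=\gamma\circ\rho$, the observation that each row (resp.\ column) move is a full cycle on the filled cells of its row (resp.\ column) regardless of the prescribed direction, and the resulting sign computation $\mathrm{sgn}(CN_{RC})=(-1)^{2(|F(A)|-n)}=1$ against the sign $(-1)^{|F(A)|-1}$ of a full-length cycle are all sound (note that the product over all $n$ rows and all $n$ columns uses the paper's standing assumption that every row and column contains a filled cell). The paper itself does not prove this proposition --- it is quoted from Corollary 2.8 of \cite{CDP} --- and your parity-of-sign argument is precisely the standard route to it, so your write-up serves as a correct self-contained proof of the cited result, with the added bonus of making explicit where squareness enters (for an $m\times n$ array the same computation gives $\mathrm{sgn}(CN_{RC})=(-1)^{m+n}$, consistent with the paper's Theorem 2.2 on totally filled arrays).
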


Moreover the above necessary condition is also sufficient in the case of completely filled arrays, in fact the following holds.
\begin{thm}[\cite{CDP}, Theorem 3.3]
Let $A$ be a square totally filled array of order $n$. There exists a solution to $P(A)$ if and only if $n$ is odd.
\end{thm}

In the literature, a special attention has been devoted to the research of solution to this problem in the case of cyclically diagonal square matrices because many known Heffter arrays share this structure. Given a square array $A$ of size $n$, for every $i \in [1,n]$ we define the $i$-th diagonal $D_i$ as the set of cells:
\[
D_i = \{(i,1), (i+1,2), \dotsc, (i-1,n)\}.
\] 
Then, an $n\times n$ array $A$ is \textit{cyclically $k$-diagonal} if its filled cells are precisely those of $k$ consecutive diagonals. 
Since the array is toroidal, it is not restrictive to assume that the filled diagonals are the first $k$ ones, namely $F(A) = \bigcup_{i=1}^k D_i$.

In view of some known results and other ones presented in the following sections, we believe that the necessary condition of Proposition \ref{prop:all_odd} is sufficient, more in general, for cyclically $k$-diagonal arrays of size $n$ with $n\geq k\geq3$. 
Note that we require $k\geq 3$ because it is trivial to note that if $A$ is a cyclically $1$-diagonal array, then a solution of $P(A)$ cannot exist.
Hence we propose the following.
\begin{conjecture} \label{conj:crazy_tour}
Let $A$ be a cyclically $k$-diagonal array of order $n$.
 There exists a solution to $P(A)$ if and only if $n$ and $k$ are odd integers with $n\geq k\geq 3$.
\end{conjecture}
The above conjecture holds in the following cases:
\begin{itemize}
\item $k\in[3,200]$, for every odd $n$, see \cite{CDP};
\item $n\geq(k-2)(k-1)$, see \cite{CDP};
\item $\gcd(n,k-1)=1$, see \cite{CMPP}.
\end{itemize}

Furthermore, there is the following  extension theorem, that highly reduces the spectrum of cases to study:
\begin{thm}[\cite{CDP}, Proposition 4.10]  \label{prop:sol_ext}
Let $A$ be a cyclically $k$-diagonal array of size $n > k$. Assume that there exists a solution to $P (A)$ with $R=(1,\dotsc,1)$ or $C = (1,\dotsc, 1)$. Then there exists a solution to $P (A')$ for any cyclically $k$-diagonal array $A'$ of size $n' = n+ \lambda (k-1)$ for any integer $\lambda \geq 0$.
\end{thm}

We remark that the problem $P(A)$ has also been studied for square arrays where the filled cells are in non consecutive diagonals; into details in the following cases:
\begin{itemize}
\item $n \equiv 1 \pmod{4}$, $k=4\ell+3$ where $n\geq k$, and either (a) $n$ prime, (b) $n=k+2$ or (c) $n>(2\ell+2)^2$, and either $n\not\equiv 0 \pmod{3}$ or $\ell\equiv 1 \pmod{3}$, see \cite{CDY};
    \item $n \equiv 1 \pmod{4}$, $k\equiv 3 \pmod{4}$, $n\geq k$ and $3 \leq k \leq 119$, see \cite{CPP}.
\end{itemize}

\section{A restriction of the Crazy Knight's move function} \label{sec:equivalenza}
In this section we show that when the array $A$ has a cyclically $k$-diagonal structure, we can consider a restriction of the Crazy Knight's move function, defined in the Introduction, on a subarray of $A$. The equivalence between the original move function and the restricted one is rather involved to prove, but it allows to get the main results of Section \ref{sec:new_sol}.

For the whole of this section $A$ is a fixed $n \times n$ cyclically $k$-diagonal array, for $n>k\geq 3$ odd integers, and $g=\gcd(n,k-1)$. As already remarked, it is not restrictive to assume that the filled diagonals of $A$ are $D_1,\ldots,D_k$. Here we show that, under some hypotheses on the vectors $R$ and $C$, we can give some necessary and sufficient conditions for the existence of a solution to $P(A)$. In particular, given a set of indexes $E=\{e_1,\dotsc,e_w\}\subset[1,n]$, with $e_1<e_2<\dotsc<e_w$,
we choose $R = (r_1,\dotsc, r_n)\in \{-1,1\}^n$ such that:
\[
r_i = \left\{ \begin{aligned}
&-1 &\text{ if $i\in E$,} \\
&1 &\text{ if $i \not \in E$,} \\
\end{aligned}
\right.
\] 
and $C = (1,\dotsc,1)$.
If $E$ is a set of indexes such that the corresponding sequence $R$ and  $C \equiv 1$ solve $P(A)$, then we say that $E$ \textit{solves} $P(A)$.

For convenience of notation, let $m$ and $\ell$ be the positive integers such that
\begin{equation} \label{eq:param_dim}
	n = mg, \qquad k-1 = \ell g.
\end{equation}
We now construct two permutations, acting on $E$ and $[1,n]$ respectively, in order to produce an equivalent formulation of a particular restriction of the Crazy Knight's move function. We remark that in Example \ref{eq:cyclically_diagonal} we give an explicit construction of what follows, hoping that this might help the reader. The first permutation is defined on the set $E$ as:
\[
\Omega: E \rightarrow E, \quad e_i \mapsto e_{i+(k-1)}.
\]
The second permutation, denoted as $\Theta$, is defined on a partition of $[1,n]$ into intervals of size $g$. For every $j \in [1,m]$ let  $I_j = [1+(j-1)g,jg] $, then:
\begin{equation}  \label{eq:procedureTheta}
\Theta = (I_{1},I_{1-\ell},I_{1-2\ell},\dotsc,I_{1-(m-1)\ell})
\end{equation} 
where the indices are read in their residue class modulo $m$. Note that it can be easily verified from (\ref{eq:param_dim}) that $\Theta$ is a permutation on $\{I_1,\dotsc, I_m\}$.

 With a slight abuse of notation, if $d$ is an integer in $I_{j_t}$ for some $t \in [1,m]$, then by $\Theta(d)$ we mean the element in $I_{j_{t+1}}$ that is equal to $d$ modulo $g$, in other words to the integer $d + n-(k-1)$ modulo $n$.

\begin{prop}
Given a cyclically $k$-diagonal array $A$ of order $n$, let $\pi: D_1 \rightarrow [1,n], (d,d) \mapsto d$ and let $S_{R,C}|_{D_1}: D_1 \rightarrow D_1$ be the restriction on $D_1$ of the Crazy Knight's move function (where, as usual, $D_1$ denotes the main diagonal of $A$). Then, $\pi S_{R,C}|_{D_1}\pi^{-1} \equiv S $,  where: 
\begin{equation} \label{eq:move_fun_diag}
S(d)= \left\{ 
\begin{aligned}
&\Theta(\Omega(d)) \quad\text{ if $d \in E$,} \\
&\Theta(d) \quad\text{ otherwise.} \\
\end{aligned}
\right. 
\end{equation}
\end{prop}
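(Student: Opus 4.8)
The plan is to realize $CN_{R,C}|_{D_1}$ as the first-return (induced) permutation of $D_1$: since $CN_{R,C}$ is a permutation of the finite set $F(A)$ and $D_1\subseteq F(A)$, every cell of $D_1$ comes back to $D_1$ after finitely many applications of $CN_{R,C}$, and $CN_{R,C}|_{D_1}$ sends $(d,d)$ to the first such return. To track these orbits I would introduce the diagonal coordinate $\delta(i,j)=(i-j)\bmod n\in\{0,\dots,k-1\}$, so that $(i,j)$ lies on $D_{\delta+1}$ and $D_1$ is exactly the locus $\delta=0$; then $\pi$ and the target formula can be read off once the first return to $\delta=0$ is located.

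The technical heart is a single local computation of $CN_{R,C}(i,j)=s_c^{1}\,s_r^{r_i}(i,j)$. Writing the filled columns of row $i$ as the cyclic block $[i-k+1,i]$ and the filled rows of a column $j'$ as $[j',j'+k-1]$, I would check, case by case, that the row move changes $\delta$ by $-1$ (if $i\notin E$) or $+1$ (if $i\in E$) and that the column move always changes $\delta$ by $+1$, each modulo $k$. Combining, one application of $CN_{R,C}$ leaves $\delta$ unchanged when $i\notin E$ and replaces $\delta$ by $\delta+2$ modulo $k$ when $i\in E$; moreover in every non-returning step the row increases by exactly $1$, while a return to $D_1$ occurs precisely when this updated value of $\delta$ is $\equiv 0\pmod k$, in which case the image is $(i-k+1,i-k+1)$. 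The bookkeeping of the cyclic successors, with the wrap-around subcases ($\delta=0$ for $i\notin E$, and $\delta=k-1,\,k-2$ for $i\in E$) handled separately, is the fiddliest part and the main obstacle; everything afterwards is a clean counting argument.

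With the local rule in hand I would iterate from $(d,d)$, where $\delta_0=0$. If $d\notin E$ then $\delta$ is unchanged at the first step, so the orbit returns immediately to $(d-k+1,d-k+1)$ and $\pi\bigl(CN_{R,C}|_{D_1}(d,d)\bigr)=d-(k-1)=\Theta(d)$, matching the second line of $S_D$. If $d\in E$, the row of the $t$-th iterate is $d+t$ and $\delta_t=2N_t\bmod k$, where $N_t$ counts how many of $d,d+1,\dots,d+t-1$ lie in $E$; since $k$ is odd, $2$ is invertible modulo $k$, so $\delta_t\equiv0$ if and only if $N_t\equiv0\pmod k$. As $N_t$ starts at $1$ and increases in unit steps, the first return occurs exactly when $N_t=k$, that is, at the $k$-th element of $E$ counted cyclically upward from $d=e_i$, which is $e_{i+k-1}=\Omega(d)$ by definition of $\Omega$.

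Finally, at that returning step the current row is $\Omega(d)$, and the local rule lands the orbit on $(\Omega(d)-k+1,\Omega(d)-k+1)$, whence $\pi\bigl(CN_{R,C}|_{D_1}(d,d)\bigr)=\Omega(d)-(k-1)=\Theta(\Omega(d))$, the first line of $S_D$. I would close by remarking that the toroidal indexing makes the count of $E$-rows and the cyclic definition of $\Omega$ agree even when the orbit wraps around the board, and that $\delta_t\neq0$ for $0<t<T$ guarantees that $T$ is genuinely the first return; this establishes $\pi\,CN_{R,C}|_{D_1}\equiv S_D$ on all of $[1,n]$.
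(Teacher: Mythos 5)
Your proposal is correct and follows essentially the same route as the paper: both read $CN_{R,C}|_{D_1}$ as the first-return map of the orbit on the main diagonal, tracking which diagonal the orbit sits on and counting crossings of rows in $E$, so that the passage through $E$ realizes $\Omega$ and the final landing realizes $\Theta$. Your bookkeeping is in fact sharper than the paper's sketch: the invariant $\delta_t = 2N_t \bmod k$ together with the invertibility of $2$ modulo the odd integer $k$ justifies precisely why the first return occurs exactly at the $k$-th $E$-crossing, a point the paper asserts without proof (and where it even misstates the first landing diagonal as $D_2$ rather than $D_3$).
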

\begin{proof}
If $d \in E$, then  the Crazy Knight's move function maps the cell $(d,d)$ to the  diagonal $D_3$ of $A$. Then to return to the main diagonal $D_1$, we first need to traverse $k-1$ rows such that their relative indexes belong to $E$, which corresponds to the map $\Omega$; we then arrive at the $(k-1)$-th diagonal, and by applying $s_c s_r^{-1}$ we return to the main diagonal. This last operation corresponds to an application of the map $\Theta$, and thus the statement follows.

If $d \not \in E$, then the application of  the Crazy Knight's move function on the cell $(d,d)$ results in the $(d+n-(k-1),d+n-(k-1))$-th cell of $A$, that is the cell having row and column indexes equal to $\Theta(d)$, hence $\pi S_{R,C}|_{D_1}\pi^{-1}(d) =\Theta(d) =S(d)$. 
\end{proof}

\begin{cor}
A set $E \subset [1,n]$ of row indexes solves $P(A)$ if and only if  $S$ is a permutation of $[1,n]$ of maximum length.
\end{cor}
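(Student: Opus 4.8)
The plan is to read the preceding proposition dynamically. Writing $\phi:=CN_{R,C}$ for the permutation of $F(A)$ and $\widehat{\phi}\colon D_1\to D_1$ for the first-return (induced) map, which sends each cell of $D_1$ to the first cell of $D_1$ encountered along its forward $\phi$-orbit, the proof of that proposition shows precisely that $\pi\widehat{\phi}\pi^{-1}=S_D$. Hence $S_D$ and $\widehat{\phi}$ have the same cycle type, and it suffices to compare the cycle structure of $\widehat{\phi}$ with that of $\phi$. First I would record the standard first-return principle: if a subset $Y$ meets every cycle of a permutation $\sigma$ of a finite set, then the induced map $\sigma_Y$ is a well-defined permutation of $Y$ whose cycles are in bijection with the cycles of $\sigma$; in particular $\sigma$ consists of a single cycle if and only if $\sigma_Y$ does. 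Applied with $\sigma=\phi$ and $Y=D_1$ (which has exactly $n$ cells), this reduces the corollary to the single structural claim that every cycle of $\phi$ meets the main diagonal $D_1$.

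That claim is the heart of the argument, and I would prove it by tracking only the diagonal index $t\in\{1,\dots,k\}$ of a cell under $\phi$. Using $C\equiv 1$, a short case analysis of the row and column moves shows two things. If the current row lies outside $E$, then $\phi$ fixes the diagonal index: on $D_t$ with $t\ge 2$ it sends $(i,i-t+1)$ to $(i+1,i-t+2)$, advancing the row by $1$, while on $D_1$ it sends $(d,d)$ to $(d-(k-1),d-(k-1))$. If instead the current row lies in $E$, then the leftward row move followed by the downward column move sends $D_t$ to $D_{t+2}$, with indices read cyclically modulo $k$, the boundary cases being $D_{k-1}\mapsto D_1$ and $D_k\mapsto D_2$.

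Now suppose, for contradiction, that some cycle $\mathcal{O}$ of $\phi$ avoids $D_1$, so all its cells lie on $D_2,\dots,D_k$. On each such diagonal the row index advances by $1$ at every non-$E$ step, hence runs through a complete residue system modulo $n$; since $E\neq\emptyset$, the orbit must eventually reach a row of $E$ and, at that moment, its diagonal index jumps by $2$ modulo $k$. Iterating, the set of diagonal indices visited by $\mathcal{O}$ is closed under adding $2$ in $\Z/k$; because $k$ is odd, $2$ generates $\Z/k$, so this set is all of $\Z/k$ and in particular contains $1$, forcing $\mathcal{O}$ to meet $D_1$ — a contradiction. Thus every cycle of $\phi$ meets $D_1$, and by the first-return principle $\phi$ is a single $nk$-cycle if and only if $\widehat{\phi}$, equivalently $S_D$, is a single $n$-cycle; that is, $E$ solves $P(A)$ if and only if $S_D$ has maximum length.

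The main obstacle is the diagonal-tracking step: the case analysis must be carried out carefully through the toroidal wrap-arounds to confirm the uniform ``$+2$ modulo $k$'' rule, and this is exactly where the parity hypotheses enter — $n$ odd makes the off-diagonal row step coprime to $n$ so that a row of $E$ is always met, and $k$ odd makes $2$ a generator of $\Z/k$. I would also dispose of the degenerate case $E=\emptyset$ separately: there no row ever belongs to $E$, the diagonals $D_2,\dots,D_k$ split off as cycles disjoint from $D_1$, and $E$ fails the covering condition of Proposition \ref{prop:nec_suc}, so it does not solve $P(A)$; the equivalence is therefore understood for the admissible sets $E$ on which the construction of $\Theta$ and $\Omega$ is non-trivial.
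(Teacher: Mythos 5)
Your proof is correct, and it actually supplies more than the paper does: the paper states this corollary with no proof at all, treating it as immediate from the preceding proposition. What the proposition gives is only that the first-return map of $CN_{R,C}$ on $D_1$ is conjugate to $S_D$; the ``only if'' direction of the corollary then follows trivially, but the ``if'' direction needs exactly the structural claim you isolate and prove, namely that every cycle of $CN_{R,C}$ meets $D_1$. Your diagonal-tracking argument for this claim is sound: with $C\equiv 1$ the move preserves the diagonal index on non-$E$ rows (advancing the row by $1$ on $D_t$, $t\geq 2$), and shifts it by $+2$ modulo $k$ on $E$-rows (your boundary cases $D_{k-1}\mapsto D_1$ and $D_k\mapsto D_2$ check out against the toroidal wrap-arounds), so a cycle avoiding $D_1$ would have its set of visited diagonal indices closed under $+2$ in $\Z/k$, which is impossible for $k$ odd. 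Two remarks. First, your handling of $E=\emptyset$ is not pedantry: in that case (e.g.\ with $\gcd(n,k-1)=1$) $S_D=\Theta$ is an $n$-cycle while $D_2$ is an invariant cycle of $CN_{R,C}$, so the corollary as literally stated fails; the paper tacitly assumes $E\neq\emptyset$, and you are right to flag and exclude this case. Second, a small inaccuracy: you say ``$n$ odd makes the off-diagonal row step coprime to $n$,'' but that step is $+1$ and is coprime to $n$ unconditionally; the only hypotheses your argument really uses are $E\neq\emptyset$ and $k$ odd. Neither point affects the validity of the proof.
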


The following remark shows that we may simplify the study of the move function of Equation (\ref{eq:move_fun_diag}) to intervals that are not disjoint from $E$.
\begin{rem} \label{rem:intervals}
Assume that there exists an interval $I_{j_t}$ such that $E \cap I_{j_t} = \emptyset$. Define  the following permutation on the set of intervals $\{I_{j_1},I_{j_2},\dotsc,I_{j_{t-1}},I_{j_{t+1}},\dotsc, I_{j_m}\}$, where $\{j_1,\dotsc, j_m\} = [1,m]$:
\[
\hat{\Theta} = (I_{j_1},I_{j_2},\dotsc,I_{j_{t-1}},I_{j_{t+1}},\dotsc I_{j_m}).
\]
Again, by an abuse of notation if $d$ is an integer in $I_{a}$ for some $a \in [1,m]\setminus\{j_t\}$, then by $\hat{\Theta}(d)$ we mean the element in $\hat{\Theta}(I_{{a}})$ that is equal to $d$ modulo $\gcd(n,k-1)$. Let now $\widehat{S}$ be:
\begin{equation} \label{eq:s_hat}
\widehat{S}(d)= \left\{ 
\begin{aligned}
&\hat{\Theta}(\Omega(d)) \quad\text{ if $d \in E$,} \\
&\hat{\Theta}(d) \quad\text{ otherwise.} \\
\end{aligned}
\right. 
\end{equation}
 Then it easily follows that for every $d \in I_{j_{t-1}}$, $\widehat{S}(d) = S^2(d)$, while for $d \in \{I_{j_1},I_{j_2},\dotsc,I_{j_{t-2}},I_{j_{t+1}},\dotsc, I_{j_m}\}$ the two functions coincide. 
\end{rem}

\begin{ex} \label{eq:cyclically_diagonal}
Take $n=15$, $k=7$ and consider the $7$-cyclically diagonal array of size $15$ shown below, where the gray cells represent the empty cells. Also we take $E = \{1,2,3,7,8\}$
and  show the related horizontal arrows, while every column is pointing downwards, that is $C\equiv 1$. Finally, we have highlighted the  elements $d_1,\dotsc,d_{15}$ of the main diagonal, and we have shown the Crazy Knight's move function applied to $d_{10}$:
\[
\begin{array}{|c||c|c|c|c|c|c|c|c|c|c|c|c|c|c|c|}\hline
\leftarrow&d_1&\cellcolor{gray}&\cellcolor{gray}&\tikzmark{f}{$\cellcolor{gray}$}&\cellcolor{gray}&\cellcolor{gray}&\cellcolor{gray}&\cellcolor{gray}&\cellcolor{gray}&&&&&& \\ \hline
\leftarrow&&d_2&\cellcolor{gray}&\cellcolor{gray}&\cellcolor{gray}&\cellcolor{gray}&\cellcolor{gray}&\cellcolor{gray}&\cellcolor{gray}&\cellcolor{gray}&&&&& \\ \hline
\leftarrow&&&d_3&\cellcolor{gray}&\cellcolor{gray}&\cellcolor{gray}&\cellcolor{gray}&\cellcolor{gray}&\cellcolor{gray}&\cellcolor{gray}&\cellcolor{gray}&&&& \\ \hline
\rightarrow&&&&\tikzmark{g}{$d_4$}&\cellcolor{gray}&\cellcolor{gray}&\cellcolor{gray}&\cellcolor{gray}&\cellcolor{gray}&\cellcolor{gray}&\cellcolor{gray}&\cellcolor{gray}&&& \\ \hline
\rightarrow&&&&&d_5&\cellcolor{gray}&\cellcolor{gray}&\cellcolor{gray}&\cellcolor{gray}&\cellcolor{gray}&\cellcolor{gray}&\cellcolor{gray}&\cellcolor{gray}&& \\ \hline
\rightarrow&&&&&&d_6&\cellcolor{gray}&\cellcolor{gray}&\cellcolor{gray}&\cellcolor{gray}&\cellcolor{gray}&\cellcolor{gray}&\cellcolor{gray}&\cellcolor{gray}& \\ \hline
\leftarrow&&&&&&&d_7&\cellcolor{gray}&\cellcolor{gray}&\cellcolor{gray}&\cellcolor{gray}&\cellcolor{gray}&\cellcolor{gray}&\cellcolor{gray}&\cellcolor{gray} \\ \hline
\leftarrow&\cellcolor{gray}&&&&&&&d_8&\cellcolor{gray}&\cellcolor{gray}&\cellcolor{gray}&\cellcolor{gray}&\cellcolor{gray}&\cellcolor{gray}&\cellcolor{gray}\\ \hline
\rightarrow&\cellcolor{gray}&\cellcolor{gray}&&&&&&&d_9&\cellcolor{gray}&\cellcolor{gray}&\cellcolor{gray}&\cellcolor{gray}&\cellcolor{gray}&\cellcolor{gray}\\ \hline
\rightarrow&\tikzmark{c}{$\cellcolor{gray}$}&\cellcolor{gray}&\cellcolor{gray}&\tikzmark{d}{}&&&&&&\tikzmark{a}{$d_{10}$}&\cellcolor{gray}&\cellcolor{gray}&\cellcolor{gray}&\cellcolor{gray}&\tikzmark{b}{$\cellcolor{gray}$}\\ \hline
\rightarrow&\cellcolor{gray}&\cellcolor{gray}&\cellcolor{gray}&\cellcolor{gray}&&&&&&&d_{11}&\cellcolor{gray}&\cellcolor{gray}&\cellcolor{gray}&\cellcolor{gray}\\ \hline
\rightarrow&\cellcolor{gray}&\cellcolor{gray}&\cellcolor{gray}&\cellcolor{gray}&\cellcolor{gray}&&&&&&&d_{12}&\cellcolor{gray}&\cellcolor{gray}&\cellcolor{gray}\\ \hline
\rightarrow&\cellcolor{gray}&\cellcolor{gray}&\cellcolor{gray}&\cellcolor{gray}&\cellcolor{gray}&\cellcolor{gray}&&&&&&&d_{13}&\cellcolor{gray}&\cellcolor{gray}\\ \hline
\rightarrow&\cellcolor{gray}&\cellcolor{gray}&\cellcolor{gray}&\cellcolor{gray}&\cellcolor{gray}&\cellcolor{gray}&\cellcolor{gray}&&&&&&&d_{14}&\cellcolor{gray}\\ \hline
\rightarrow&\cellcolor{gray}&\cellcolor{gray}&\cellcolor{gray}&\tikzmark{e}{$\cellcolor{gray}$}&\cellcolor{gray}&\cellcolor{gray}&\cellcolor{gray}&\cellcolor{gray}&&&&&&&d_{15}\\ \hline
\end{array}
\link{a}{b}
\link{c}{d}
\link{d}{e}
\link{f}{g}
\]
Since $n=15$ and $k=7$, we have that $g=\gcd(n,k-1)=3$, hence the parameter $\ell$ and $m$ defined in Equation (\ref{eq:param_dim}) are $2$ and $5$, respectively. Following the construction described in  (\ref{eq:procedureTheta})  we partition the interval $[1,15]$ into the sets $I_j = [1+3(j-1),3j]$, for $j \in [1,5]$:
$$I_1 = \{1,2,3\}  \quad I_2 = \{4,5,6\} \quad  I_3 = \{7,8,9\} \quad  I_4 = \{10,11,12\} \quad I_5= \{13,14,15\},$$
and we define the permutation $\Theta$
\[
\Theta = (I_1,I_{1+3},I_{1+6},I_{1+9},I_{1+12})\Rightarrow \Theta = (I_1,I_4,I_2,I_5,I_3).
\]
As described before, if we consider for instance $10 \in I_4$, $10 \not\in E$, then $\Theta(10)$ is the element of $I_2$ that belongs to the same congruence class of $10$ modulo $3$, namely $\Theta(10) = 4$. Indeed, as shown in the array, the Crazy Knight's move function maps $d_{10}$ to $d_4$. 
Note that 
the function $\Omega$ is so defined:
$$\Omega(1)  =2\quad  \Omega(2) =3 \quad \Omega(3) =7\quad \Omega(7)=8\quad \Omega(8) =1.$$
Hence, we can overall define the Crazy Knight's move function restricted to the main diagonal, namely the function $S$,  as in Equation (\ref{eq:move_fun_diag}). In particular, note that for each $d \in I_4$:
\[
d   \xrightarrow{\Theta} \Theta(d) \in I_2 \xrightarrow{\Theta} \Theta^2(d) \in I_5. 
\]
Now, since $E\cap I_2 = \emptyset$, it is clear that $S(d') = \Theta(d')$ for every $d' \in I_2$. Hence, if we let $\hat{\Theta} = (I_1,I_4,I_5,I_3)$, it can be seen that for each $d \in I_4$:
\[
d   \xrightarrow{\hat{\Theta} } \hat{\Theta}(d) = \Theta^2(d).
\]
If we define $\widehat{S}$ as in Equation (\ref{eq:s_hat}), it easily follows that $\widehat{S}(d) = S^2(d)$ for every $d \in I_{4}$, while for $d \in \{I_1, I_3, I_5\}$ the two functions coincide.
\end{ex}

By iteratively applying Remark \ref{rem:intervals} we observe that once we have chosen a set $E$ of indexes such that $r_i=-1$ if and only if $i \in E$, we can focus on the Crazy Knight's move function restricted to the minimal union of the intervals $I_1,\dotsc,I_m$ containing $E$. 

From now on, we assume that the set $E$ is fixed and that for some $t \in [1,m-1]$ the family $\{I_{j_1}, \dotsc, I_{j_{t+1}}\}$ is the minimal set of intervals containing $E$, where the indexes $j_1,\dotsc,j_{t+1}$ are chosen such that $\Theta = (I_{j_1}, \dotsc, I_{j_{t+1}})$.
We moreover require the following conditions: 
\begin{equation}\label{eq:cond1}
    j_1 < j_2 < \dotsc < j_{t+1} 
\end{equation}
\begin{equation} \label{eq:cond2}
\text{$E = \left(\bigcup_{a=1}^{t+1} I_{j_a}\right)  \setminus [j_{t+1}g-f+1, j_{t+1}g]$  for some integer $f \in [1,g-1]$,}
\end{equation}
where, as before, $g=\gcd(n,k-1)$.
In other words, from Condition (\ref{eq:cond1}) we assume that all intervals, whose ordering is such that their indexes have always difference equal to a multiple of $-\ell$ modulo $m$, are also ordered with respect to the classical ordering of the integers.  From Condition (\ref{eq:cond2}), we deduce that $E$ is the union of $t$ intervals $I_{j_1}, \dotsc, I_{j_t}$ of size $g$ and a subinterval of $I_{j_{t+1}}$ of size $g-f$. Let $S_E: E \rightarrow E$ denote the move function restricted on $E$ defined as:
\begin{equation}
\label{eq:s_E}
{S_E}(e)= \Theta^\gamma(\Omega(e)),
\end{equation}
where $\gamma$ is the minimum strictly positive integer such that $\Theta^\gamma(\Omega(e)) \in E$. We now show that under Conditions (\ref{eq:cond1}) and (\ref{eq:cond2}) the function $S_E$ is equivalent to a move function on a partially filled board of another generalization of the classical chessboard knight.

Given a nonzero integer $h$ and two vectors $R,C \equiv 1$, we say that the move function $N_h$ is an \textit{$h$-knight}  on an array $B$ if for every $(i,j)\in F(B)$:
\[
N_h(i,j) := s_C(s_R^h(i,j)).
\]
A $1$-knight is simply called a \textit{bishop}, while a $(-1)$-knight is called a \textit{reversed bishop}. Note that for $h = \pm 1$ the $h$-knight corresponds to the Crazy Knight where $C \equiv 1$ and $R \equiv \pm 1$, respectively.

\begin{ex}
In this example we show a $3$-knight's move over the following array: 
\[
\begin{array}{|c|c|c|c|c|c|} \hline
 \tikzmark{a}{$\bullet$ }&\cellcolor{gray}&\tikzmark{b}{}&\cellcolor{gray}&\tikzmark{c}{ }& \tikzmark{d}{$\diamondsuit$  } \\ \hline
&&&\cellcolor{gray}&\cellcolor{gray}& \cellcolor{gray} \\ \hline
&&&&& \tikzmark{e}{$\circ$ } \\ \hline
\end{array}
\link{a}{b}
\link{b}{c}
\link{d}{e}
\link{c}{d}
\]
In particular, we have that $s_R^3(1,1) = (1,6)$, that is the cell marked with the symbol $\diamondsuit$, and $s_C(1,6) = (3,6)$, namely the cell having the symbol $\circ$. Hence, $N_3(1,1) = (3,6)$.
\end{ex}

\begin{prop} \label{thm:main_eqv}
Let $A$ be a cyclically $k$-diagonal array of size $n$, with $n>k \geq 3$ and let $g= \gcd(n,k-1)$. Let $E$ be a subset of $[1,n]$ satisfying Conditions (\ref{eq:cond1}) and (\ref{eq:cond2}) and set $h=k-1-|E|$. Moreover let $R = (r_1,\dotsc, r_n) \in \{-1,1\}^n$, where $r_i = -1$ if and only if $i \in E$, and let $C = (1,\dotsc,1)$. Then $R$ and $C$ are a solution to $P(A)$ if and only if the $h$-knight is a solution to $T(G)$, where $G:=G_f$ is
\begin{equation} \label{eq:procedureD}
\begin{aligned}
&\text{the\ $(t+1)\times (h+g)$\ array\ whose\ empty\ cells}\\
&\text{are\ exactly\ the\ last\ $f$\ cells\ of\ the\ last\ row.}
\end{aligned}
\end{equation}
\end{prop}
\begin{proof}
We divide the proof of the statement in two steps:  firstly we show  that there is a natural bijection $\phi$ between the set $E$ and the filled cells of a suitable subarray $B$ of $G$; then, we show that for every $e \in E$ it holds:
\begin{equation}\label{eq:equiv_knight_h}
S_E(e) = \phi^{-1}N_h^\alpha\phi(e),
\end{equation}
where $\alpha\geq 1$ is the minimum integer such that $N_h^\alpha(e) \in B$. From Equation (\ref{eq:equiv_knight_h}) it is then clear that the Crazy Knight's move function restricted to the set $E$ is equivalent to an $h$-knight over the array $G$, thus proving the necessary and sufficient condition of the statement.

\paragraph{\textbf{Step 1.}} From Condition (\ref{eq:cond2}) we know that $E$ is the union of $t$ intervals $I_{j_1}, \dotsc, I_{j_t}$ of size $g$, and  a  subinterval $I'_{j_{t+1}} $ of $I_{j_{t+1}}$ of size $g-f$. Let $B $ be the subarray of $G$ containing its last $g$ columns. Hence the empty cells of $B$ are exactly the last $f$ cells of its last row. Note that $|F(B)|=g(t+1)-f=gt+(g-f)=|E|.$

For every $I_{j_a} = \{e_{a,1}, \dotsc, e_{a,g}\}$ with $a \in [1,t]$ and for $I'_{j_{t+1}} = \{e_{t+1,1},\dotsc, e_{t+1,g-f} \}$, where $e_{y,x_1}<e_{y,x_2} \Leftrightarrow x_1<x_2$ for each $y \in [1,t+1]$, let $\phi$ be the map that assigns $e_{u,v}$ to the cell $(u,v)$ of $B$, which is filled from the definition of $B$. Moreover, since the intervals $I_{j_1},\dotsc,I_{j_{t}},I_{j_{t+1}}'$ are pairwise disjoint and $|F(B)| = |E|$, we have that $\phi$ is a bijection. As a final remark, notice that if we endow the indexes of the filled cells of the array $B$ with the following partial ordering:
\[
(u,v) \prec (u',v') \Leftrightarrow u<u',
\]
 the map $\phi$ then preserves the natural ordering of the set $\{j_1, \dotsc, j_{t+1}\}$.

\paragraph{\textbf{Step 2.}} 
By Condition (\ref{eq:cond1}) the sequence $(j_1,\dotsc, j_{t+1})$ is  ordered with respect to both the natural ordering of the integers and  the action of the map $\Omega$. Since $\phi$ is order-preserving, for any $e \in E$ it holds:
\[
	\phi^{-1} s_C \phi (e) = \Theta^\gamma(e).
\]
Thus, recalling the definition of $S_E$ and $N_h$, we note that Equation (\ref{eq:equiv_knight_h}) is equivalent to:
\[
\begin{aligned}
S_E(e) = \phi^{-1}N_h^\alpha\phi(e) &\Leftrightarrow  \Theta^\gamma(\Omega(e))  =  \phi^{-1}s_c s_r^h N_h^{\alpha-1}\phi(e) \\
& \Leftrightarrow \Theta^\gamma(\Omega(e))  =  \phi^{-1}s_c \phi \phi^{-1} s_r^h N_h^{\alpha-1}\phi(e).
\end{aligned}
\]
Hence, in order to prove Equation (\ref{eq:equiv_knight_h}) it is sufficient to verify that:
\begin{equation} \label{eq:simplified}
\phi^{-1} s_r^hN_h^{\alpha-1} \phi (e) = \Omega(e).
\end{equation}

Now, let $G$ be the $(t+1)\times (h+g)$ array defined in (\ref{eq:procedureD}).
Since $h=k-1-|E|$ and $|E|=g(t+1)-f$, we have $h = pg+f$ for some integer $p\geq0$, and  we prove Equation (\ref{eq:simplified}) by induction on $p$.
Assume first that $p=0$, hence $h=f$ and $G$ is a $(t+1)\times (f+g)$ array. Let $e_j \in E$ and $\phi(e_j) = (u,v)$, we distinguish between two cases:
\begin{itemize}
	\item if $u \neq t+1$ and $v\leq g-f$, or if $u = t+1$ and  $v\leq g-2f$, then
	\[
			s_R^f N_f^0\phi(e_j)  = s_R^f\phi(e_j) \in B,
	\]
	hence  $s_R^f\phi(e_j) = \phi(e_{j+f}) = \phi\Omega(e_{j})$;
	\item in the other cases, it is easy to see that:
	\[
s_R^f N_f^1\phi(e_j) \in B,
	\]
	since $s_R^f N_f^1\phi(e_j)$ is the $ (u+2f,v+1)$-th cell in the array $G$, this corresponds to the $(u+f,v+1)$-th cell in the array $B$, that is exactly $\phi\Omega(e_j)$.
\end{itemize}

Assume now that Equation (\ref{eq:simplified}) holds for $h = pg+f$; we prove that this implies that it holds for $h'= (p+1)g+f$. Let $G$ and $G'$ be the two arrays relative to $h$ and $h'$, respectively. It can be easily seen that $G'$ is composed by the concatenation of a completely filled $(t+1)\times g$ array with $G$, hence we refer to $G$ as being contained inside $G'$. Then, let $\Omega$ and $\Omega'$ be the functions on $E$ relative to $h$ and $h'$, respectively.

Assume first that $e \in E$ is such that $s_c \phi\Omega (e)  =\phi \Omega'(e)$.  By inductive hypothesis, on $G$ we have $ s_r^hN_h^{\alpha-1} \phi (e) = \phi\Omega(e)$. 
We point out that in $G$  (resp. $G'$) an $h$-knight (resp. $h'$-knight) move is equivalent to a $(-g)$-knight move for all rows, except for the last one where it corresponds to a $(-g+f)$-knight move.
Let $(u,v) = N_h^{\alpha-1} \phi (e)$ in $G$, that is also $N_{h'}^{\alpha-1} \phi (e)$ in $G'$ by previous remark: it is easy to see that, since $s_r^h(u,v)$ in $G$ belongs to the array $B$, the move $s_r^{h'}(u,v)$ is contained in $G' \setminus G$, hence we apply $N_{h'}$ to $(u,v)$. The resulting cell belongs to $G' \setminus G$, and it follows that $s_r^{h'}N_{h'}(u,v)$ is contained in $B$. Hence, in $G'$, it holds:
\[
s_r^{h'}N_{h'}(u,v) = s_r^{h'}N_{h'}N_{h'}^{\alpha-1}\phi(e) =  s_r^{h'}N_{h'}^{\alpha}\phi(e)  \in B.
\] 
Comparing this formula to the one  for $G$, we have applied  an $h'$-knight move once more. A simple counting argument shows that $s_r^{h'}N_{h'}^{\alpha}\phi(e)$ in $G'$ belongs to the same column of  $ s_r^hN_h^{\alpha-1} \phi (e) = \phi\Omega(e)$ in $G$, and as it is contained in the successive row it follows that:
 \[
 s_r^{h'}N_{h'}^{\alpha}\phi(e) = s_c \phi\Omega (e)  =\phi \Omega'(e).
 \]

Let now $e \in E$ be such that $s_c \phi\Omega (e)  \neq \phi \Omega'(e)$. Notice that this happens if and only if  $\Omega (e) = e_u$, where $u \in[|E|-g+1,|E|]$. Also in this case, let $(u,v) =N_h^{\alpha-1}	\phi(e)$ in $G$, that is also $N_{h'}^{\alpha-1}\phi(e)$ in $G'$. It easily follows that $v \in \{t,t+1\}$, and that $s_r^{h'}(u,v)$ does not belong to $B$, hence we apply the $h'$-knight move function.

If $v = t+1$, then $N_{h'}(u,v) \in G' \setminus G$, in particular, it belongs to the first row of $G'$, hence $s_r^{h'}N_{h'}(u,v) = (x_1,y_1) \in B$. 
If $v = t$, then $N_{h'}(u,v) \in G' \setminus G$; in particular, it belongs to the last row of $G'$. Since $s_c \phi\Omega (e)  \neq \phi \Omega'e)$, it follows that $s_r^{h'}N_{h'}(u,v) \not\in B$, hence we apply the $h'$-knight move function another time, thus $s_r^{h'} N^2_{h'}(u,v) = (x_2,y_2)$ belongs to $B$. Let $\eta $ be the map that associates to each $e \in E $  such that $s_c \phi\Omega (e)  \neq \phi \Omega'(e)$ its corresponding cell, that is either $(x_1,y_1)$ or $(x_2,y_2)$.
It is easy to see that $x_1 = x_2 = 1$, and as $|[|E|-g+1,|E|]| = g$ we have that $\eta$ is a bijection between the elements of $[|E|-g+1,|E|]$ and the first row of $B$. To conclude, it is sufficient to check that $\eta$ is order-preserving, where the ordering of the cells of the row is intended as the natural one, i.e. $(1,y_1) \prec (1,y_2) \Leftrightarrow y_1 <y_2$. 
Let $(u_\gamma,v_\gamma) = N_{h'}^{\alpha_\gamma-1}\phi(e_\gamma)$ for $\gamma \in \{1,2\}$, where $e_1 < e_2$. Clearly, if $u_1 = u_2$, then  the order is preserved. If $u_1 = t$ and $u_2 = t+1$, then $N_h'(u_1,v_1) < (u_2,v_2)$: in fact, $N_h'(u_1,v_1)$ belongs to $G' \setminus G$, while $(u_2,v_2)$ belongs to $G$.
Thus, $\eta$ is order-preserving, concluding the proof.
\end{proof}

\begin{ex} \label{ex:D_example}
Let $n\equiv 5\pmod{10}$ and $k=21$, hence $g=5$. Take $t=2$ and $f = 2$, which implies
$|E|=(t+1)g-f=13$ and $h=7$.
We first show the move function $S_E$ acting on $E$. To this aim, we construct the following array $B$ of order $(t+1)\times g=3\times 5$, that has $|E|=13$ filled cells: 
\[
\begin{array}{|c|c|c|c|c|}\hline
1 &12 &10&13&11  \\ \hline
9&8&7&6&5  \\ \hline
4&3&2&\cellcolor{gray}&\cellcolor{gray}  \\ \hline
\end{array}
\]
A gray cell corresponds to an empty cell, while each filled cell $(i,j)$ is numbered with the smallest integer $\alpha \geq 1$ such that $\phi S_E^{\alpha-1}(e_1)=(i,j)$.
We then construct the array $G$ as in (\ref{eq:procedureD}), and we label the filled cells in the corresponding visiting sequence of the $7$-knight's move function, starting from the cell labelled with $1$:
\[
\begin{array}{|c|c|c|c|c|c|c||c|c|c|c|c|}\hline
22&19&16&13&10&7&4&1&30&25&32&27 \\ \hline
8&5&2&31&26&33&28&23&20&17&14&11 \\ \hline
34&29&24&21&18&15&12&9&6&3&\cellcolor{gray}&\cellcolor{gray} \\ \hline
\end{array}
\]
It can be seen that the restriction of the ordering of the $7$-knight's move function on the last $g$ columns of $G$ (that can be seen by reading the numbers of the last $g$ columns of $G$ in crescent order) corresponds to the ordering  obtained starting from $e_1$ on the array $B$ under the action of $S_E$:
\[
\begin{array}{|c|c|c|c|c|}\hline
1&30&25&32&27 \\ \hline
23&20&17&14&11 \\ \hline
9&6&3&\cellcolor{gray}&\cellcolor{gray} \\ \hline
\end{array}\quad
\Rightarrow \quad\begin{array}{|c|c|c|c|c|}\hline
1 &12 &10&13&11  \\ \hline
9&8&7&6&5  \\ \hline
4&3&2&\cellcolor{gray}&\cellcolor{gray}  \\ \hline
\end{array}
\]
\end{ex}

\section{An equivalent bishop's tour problem} \label{sec:aux_lem}
In this section we prove some auxiliary results that we use in Section \ref{sec:new_sol} to find solutions to the Crazy Knight's Tour Problem on cyclically diagonal square arrays. In particular, our goal is to show that the generalized knight's tour problem on the array $G$ introduced in (\ref{eq:procedureD}) is equivalent to a bishop's tour problem on another array, say $G^*$. To this aim, after having introduced some notation, we present a list of statements that, applied sequentially to $G$, allow to construct the desired array $G^*$. 

 Given two arrays $A$ and $B$, we denote by $A \otimes B$ the Kronecker product between $A$ and $B$. This operation is naturally extended when the two arrays are partially filled: namely, we define the multiplication between an empty cell and another cell (either filled or empty) to be empty.
Moreover, we denote by $J_{m,n}$ the completely filled $m \times n$ array.

We begin with the following simple lemma:
\begin{lem} \label{lem:eq_h_knight_bishop}
Let $G$ be any array. Then, an $h$-knight is a solution to $T(G)$ if and only if a bishop is a solution to $T(G \otimes J_{h,1})$.
\end{lem}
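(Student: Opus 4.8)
The plan is to describe the filled cells of $D \otimes J_{h,1}$ explicitly, to set up a \emph{layering} bijection with the filled cells of $D$, and then to show that the bishop on the product array realizes the $h$-knight on $D$ after exactly $h$ steps. Write $\widetilde{D} = D \otimes J_{h,1}$ and suppose $D$ is $r \times c$. Since $J_{h,1}$ is the completely filled $h \times 1$ array, each filled cell $(i,j) \in F(D)$ is replaced in $\widetilde{D}$ by a vertical strip of $h$ filled cells, occupying rows $(i-1)h+1, \dots, ih$ of column $j$, while every empty cell of $D$ becomes an empty strip. I will therefore index the filled cells of $\widetilde{D}$ by triples $((i,\ell),j)$ with $(i,j) \in F(D)$ and $\ell \in [1,h]$ (the \emph{layer} inside the strip), and define $\phi \colon F(D) \to F(\widetilde{D})$ by $\phi(i,j) = ((i,1),j)$, landing on the top layer.

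The first key step is to compute the row and column successors of $\widetilde{D}$ in these coordinates. Because every strip in a given column is completely filled, and each product-row $(i-1)h+\ell$ has exactly the same filled columns as row $i$ of $D$, the row successor ignores the layer: $s_R((i,\ell),j) = ((i,\ell), j')$, where $(i,j') = s_R(i,j)$ in $D$. The column successor instead increments the layer inside a strip, $s_C((i,\ell),j) = ((i,\ell+1),j)$ when $\ell < h$, and when $\ell = h$ it jumps to the top of the next filled strip below, $s_C((i,h),j) = ((i',1),j)$, where $(i',j) = s_C(i,j)$ in $D$. Composing, the bishop $N_1 = s_C \circ s_R$ on $\widetilde{D}$ advances the layer by one, cyclically modulo $h$, at each application; a direct induction on these formulas then shows that $N_1^{\,h}$ maps top-layer cells to top-layer cells with $\phi^{-1} N_1^{\,h} \phi = N_h$, the $h$-knight on $D$. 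This is the heart of the argument, and the step I expect to require the most careful bookkeeping, since one must track the column reached after $h$ successive row moves and verify that the single downward column move occurs exactly when the layer wraps around from $h$ back to $1$.

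Granting this, the tour equivalence follows from a short orbit-counting argument. Since $N_1$ shifts the layer by $+1$ modulo $h$, every $N_1$-orbit has length divisible by $h$, meets the top layer, and its top-layer cells constitute a single orbit of $N_1^{\,h}$, hence, via $\phi$, a single orbit of $N_h$ in $D$. Thus the $N_1$-orbits of $\widetilde{D}$ are in bijection with the $N_h$-orbits of $D$, each product orbit being $h$ times as long as the corresponding orbit in $D$. In particular, there is exactly one $N_1$-orbit, necessarily of length $h\,|F(D)| = |F(\widetilde{D})|$, if and only if there is exactly one $N_h$-orbit, of length $|F(D)|$. This is precisely the assertion that a bishop solves $T(D \otimes J_{h,1})$ if and only if an $h$-knight solves $T(D)$.
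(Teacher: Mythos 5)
Your proof is correct and takes essentially the approach the paper intends: the paper states this lemma without any written proof, offering only a figure in which each filled cell of $D$ becomes a vertical strip of $h$ filled cells and the bishop advances one layer per move, completing an $h$-knight move of $D$ every $h$ bishop moves. Your layering bijection, the conjugation $\phi^{-1}N_1^{\,h}\phi = N_h$ on top-layer cells, and the orbit-counting conclusion are a faithful and complete formalization of exactly that picture (including the toroidal wrap-around, which your successor formulas handle correctly).
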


\begin{ex}
In this example we consider a $3$-knight's move on the array $G$, and its corresponding bishop's move on the array $G \otimes J_{3,1}$. It can be seen that a knight's jump on $G$, from the cell labeled $1$ to the one labeled $2$, is equivalent to three consecutive bishop's moves on $G \otimes J_{3,1}$. 
    \begin{figure}[h]\label{fig:correspondence} 
$$
G = \begin{array}{|c|c|c|c|c|} \hline
\tikzmark{a}{1}&\cellcolor{gray}&&& \tikzmark{b}{} \\ \hline 
\SetToWidest{}&\SetToWidest{} &\SetToWidest{}&\SetToWidest{}&\tikzmark{c}{2} \\ \hline
\end{array}
\link{a}{b}
\link{b}{c} \qquad
\Leftrightarrow \qquad
G \otimes J_{3,1} = \begin{array}{|c|c|c|c|c|} \hline
\tikzmark{a1}{1}&\cellcolor{gray}&\tikzmark{b1}{}&&  \\ \hline
&\cellcolor{gray}&\tikzmark{c1}{2}&\tikzmark{d1}{}&  \\ \hline
&\cellcolor{gray}&&\tikzmark{e1}{3}& \tikzmark{f1}{} \\ \hline \hline
&&&&\tikzmark{g1}{4}\\ \hline 
&&&&\\ \hline
\SetToWidest{}&\SetToWidest{}&\SetToWidest{}&\SetToWidest{}&\SetToWidest{}\\ \hline
\end{array}
\link{a1}{b1}
\link{b1}{c1}
\link{c1}{d1} 
\link{d1}{e1} 
\link{e1}{f1} 
\link{f1}{g1} 
$$
\caption{A $3$-knight on $G$ and a bishop on $G\otimes J_{3,1}$.}
\end{figure}
\end{ex}

Given an $\ell\times n$ array $A_1$ and an $m\times n$ array $A_2$, by $\frac{A_1}{A_2}$ we mean the $(\ell+m)\times n$ array whose first
$\ell$ rows are exactly those of $A_1$, while its last $m$  rows are exactly those of $A_2$.
An important tool used in this section is the concept of equivalence of two arrays $A_1$ and $B_1$ with respect to another given array $A_2$. The idea is to establish a condition such that, when considering a move function on $\frac{A_1}{A_2}$, $A_1$ can be replaced with the array $B_1$. 
\begin{defi}
Let $\phi$ be a move function on an array $ \frac{A_1}{A_2}$ and, for each $(i,j) \in F(A_2)$, let $\phi_2: F(A_2) \rightarrow F(A_2)$ be the function:
\begin{equation} \label{eq:phi2}
		\phi_2(i,j) = \phi^k(i,j), 
\end{equation}   
where $k$ is the minimum strictly positive integer such that $\phi^k(i,j) \in F(A_2)$. Similarly, define $\sigma$ to be a move function on an array $\frac{B_1}{A_2}$ and, for each $(i,j) \in F(A_2)$, let $\sigma_2: F(A_2) \rightarrow F(A_2)$ be the function:
\begin{equation} \label{eq:sigma2}
		\sigma_2(i,j) = \sigma^k(i,j), 
\end{equation}
where $k$ is the minimum strictly positive integer such that $\sigma^k(i,j) \in F(A_2)$. We then say that $A_1$ and $B_1$ are \textit{$(\phi,\sigma)$-equivalent with respect to $A_2$} if for every $(i,j) \in F(A_2)$ it holds:
\[
\begin{aligned}
&\phi(i,j)  \in F(A_2)\Leftrightarrow\sigma(i,j)  \in F(A_2), \\
 &\phi(i,j), \sigma(i,j) \not \in F(A_2) \Rightarrow \phi_2(i,j) =\sigma_2(i,j).\\
\end{aligned}
\]
\end{defi}
Moreover, when the move functions $\phi$ and $\sigma$ are essentially the same, e.g. an $h$-knight for some nonzero integer $h$, we simply say that $A_1$ and $B_1$ are \textit{$\phi$-equivalent with respect to $A_2$}, and we  may  omit $A_2$ if it is clear from the context.

\begin{figure}[h]
\begin{center}
\includegraphics[width=0.6\textwidth]{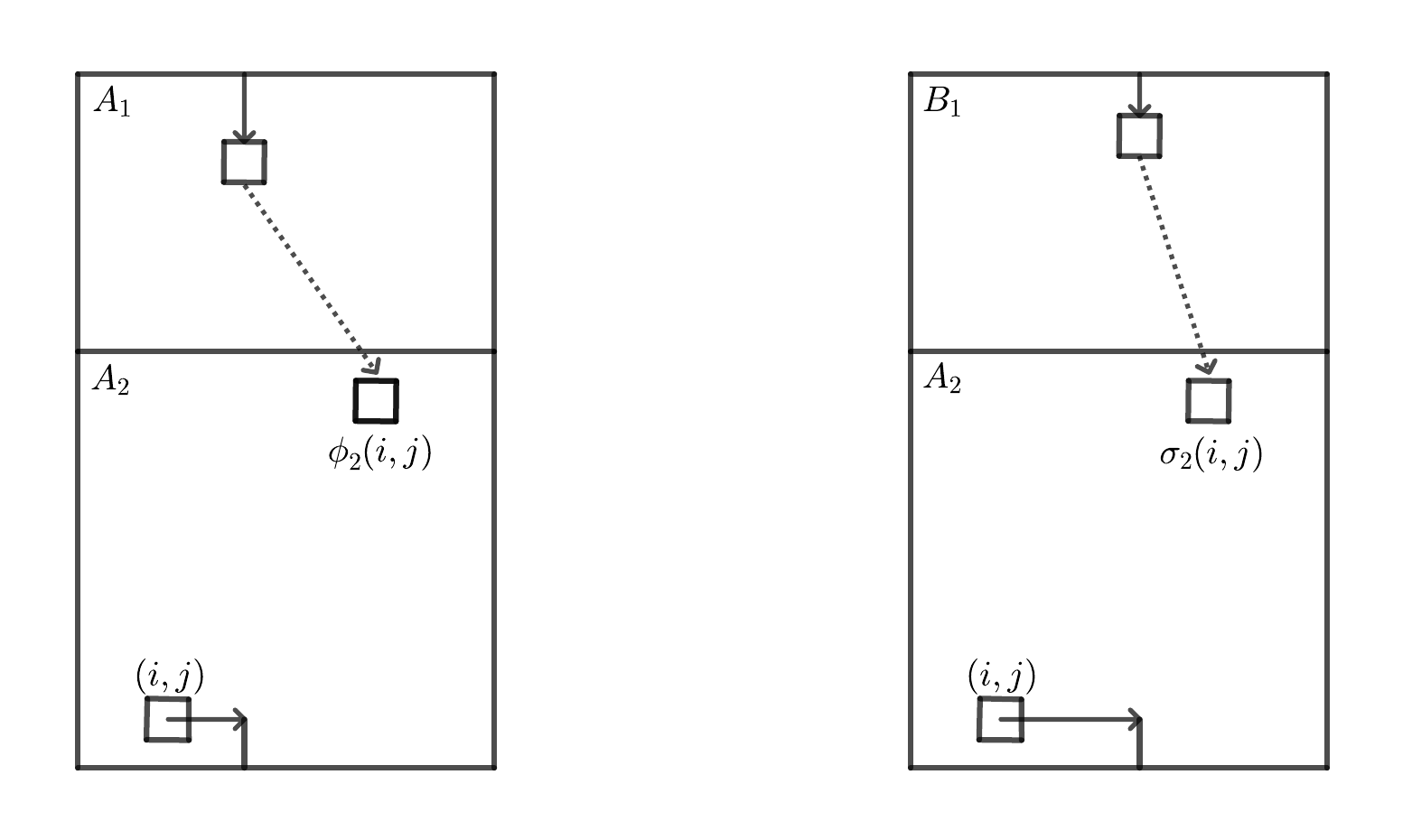}
\caption{The functions $\phi_2$ and $\sigma_2$ applied to the $(i,j)$-th cell of $A_2$.}
\end{center}
\end{figure}
We point out that in order to be equivalent it is not necessary that $A_1$ and $B_1$ have the same order, but only the same number of columns.

\begin{prop}
	Let $\phi$ be the move function of a bishop, and let $A_1$ and $B_1$ be $\phi$-equivalent  with respect to $J_{m,n}$. Then, $A_1$ and $B_1$ are $\phi$-equivalent with respect to every $m \times n$ array. 
\end{prop}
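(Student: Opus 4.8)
The plan is to reduce the notion of $\phi$-equivalence to two pieces of purely column-wise data attached to the top block, and then to observe that the fully filled choice $A_2=J_{m,n}$ already pins down this data for \emph{every} column, whereas an arbitrary $m\times n$ array can only ever test a subset of it.

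First I would fix the geometry. In $[A_1\mid A_2]^T$ the block $A_2$ occupies a band of consecutive rows, and for a bishop $N_1=s_C\circ s_R$ the row move $s_R$ keeps a cell inside its band, while the (toroidal) column move $s_C$ is the only move that can carry a cell out of the $A_2$-band into the $A_1$-band and back. Thus, for $(i,j)\in F(A_2)$, writing $(i,j')=s_R(i,j)$, the bishop leaves $A_2$ exactly when the downward scan of column $j'$ meets no further filled cell of $A_2$ below row $i$ and then meets a filled cell of $A_1$. I would record two consequences. The event of leaving $A_2$ depends only on an $A_2$-internal condition (does column $j'$ of $A_2$ have a filled cell below row $i$?), which is common to $\phi$ and $\sigma$, together with the condition that column $j'$ of the top block be nonempty. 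Moreover, when the bishop does leave, it re-enters the $A_1$-band at the top of column $j'$, wanders inside $A_1$ by row-and-column moves, and exits again into column $\psi_{A_1}(j')$, where the exit-column map $\psi_{A_1}$ is determined by $A_1$ \emph{alone} (the path inside the band and the exit column do not involve $A_2$); it then lands at the topmost filled $A_2$-cell of that column, unless that column of $A_2$ is empty, in which case it re-enters the top block at the top of column $\psi_{A_1}(j')$ and the process repeats.

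Next I would extract the two invariants from the hypothesis. Applying the definition of $\phi$-equivalence with $A_2=J_{m,n}$, exits can occur only from the bottom edge of the band, and as the starting cell varies the column $j'$ ranges over all columns. The first clause of equivalence then forces $A_1$ and $B_1$ to have the same set of nonempty columns, and — since every column of $J_{m,n}$ is filled, so each excursion returns after a single pass — the second clause forces $\psi_{A_1}=\psi_{B_1}$ on every column. Both equalities are statements about $A_1$ and $B_1$ only, with $A_2$ eliminated.

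Finally, for an arbitrary $m\times n$ array $A_2$ I would verify the two clauses directly. The first clause is immediate: for fixed $A_2$, leaving $A_2$ from $(i,j)$ reduces to the common $A_2$-internal condition together with ``column $j'$ of the top block nonempty,'' which is now the same for $A_1$ and $B_1$. For the second clause I would trace the excursion: the return cell is the topmost filled $A_2$-cell in column $\psi^{\,s}(j')$, where $\psi$ is the common exit map and $s\ge 1$ is minimal with that column of $A_2$ nonempty; since $\psi_{A_1}=\psi_{B_1}$, the iterates coincide, the minimal $s$ agree, and $\phi_2(i,j)=\sigma_2(i,j)$. The main obstacle is precisely this last step: in $J_{m,n}$ every excursion returns after one pass, so the hypothesis controls only a single application of the exit map, whereas a general $A_2$ may have empty columns that push the bishop through several passes of the top block. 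The crux is therefore to show that each pass is still governed by the very same map $\psi$ — because the down move following an empty $A_2$-column wraps back to the top of that same column of $A_1$, i.e.\ re-entry is identical to a fresh entry — so that the single-step agreement $\psi_{A_1}=\psi_{B_1}$ bootstraps to agreement of the entire return map.
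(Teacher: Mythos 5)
Your proof is correct, and it rests on the same core observation as the paper's: an excursion through the top block depends only on the column through which the bishop enters it, and $J_{m,n}$ realizes every possible entry column, so equivalence with respect to $J_{m,n}$ already determines everything that any other bottom array can detect. The execution, however, is genuinely different. The paper argues by contradiction at the level of individual cells: it transfers the entry cell of the $B_2$-excursion to an excursion from a suitable cell $(u,v)$ of $J_{m,n}$, identifies the two return cells $(x,y)$ using the hypothesis, and then equates predecessors of return cells (the identities in \eqref{eq:proof_indexes}) to force both $B_2$-return cells into column $y$. That predecessor identity tacitly assumes each excursion exits the top block at its \emph{first} attempt, i.e.\ single-pass behaviour; this is legitimate under the paper's standing convention that every row and every column of every array contains a filled cell (recalled in the remark immediately following the proposition), since then no column of the target array is empty. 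You instead extract two explicit invariants of the top block --- its set of nonempty columns and the exit-column map $\psi$ --- prove that the hypothesis forces them to coincide for $A_1$ and $B_1$, and show that the return map over an arbitrary bottom array is a function of these invariants alone, iterating $\psi$ through failed exits and observing that a failed exit re-enters the top block exactly like a fresh entry. This buys real generality: your argument remains valid for bottom arrays with empty columns, where multi-pass excursions occur and the paper's predecessor identity literally fails (the return column is then an iterate $\psi^{s}(j')$ with $s>1$, not $y$). The one detail you leave implicit is termination, namely that some iterate $\psi^{s}(j')$ is a nonempty column of the bottom array; this follows because the bishop's move function is a permutation of the finite set of filled cells, so the orbit of a cell of the bottom array must return to that array --- the same tacit assumption underlying the paper's definition of $\phi_2$.
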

\begin{proof}
Let $A_1$, $B_1$ be as in the statement, set $A_2=J_{m,n}$ and let $B_2$ be any $m \times n$ array. For every $C \in \{A_1,B_1\}$ and $D \in \{A_2,B_2\}$, let:
\begin{itemize}
	\item $\phi_{C,D}$ be the bishop's move function on the array $\frac{C}{D}$;
	\item $\phi_{2,C,D}$ be the function on $D$ defined in Equation (\ref{eq:phi2}) starting from $\phi_{C,D}$.
\end{itemize} 
Assume by contradiction that $A_1$ and $B_1$ are not $\phi$-equivalent with respect to $B_2$, i.e. there exists $(i,j) \in F(B_2)$ such that $\phi_{2,A_1,B_2}(i,j) \neq \phi_{2,B_1,B_2}(i,j)$. Since the bishop's move function is defined as the composition of an horizontal move and a vertical move, we have that:
\[
		\phi_{A_1,B_2} (i,j) = (a_1,a_2) \text{ and } \phi_{B_1,B_2} (i,j) = (b_1,b_2) \Rightarrow a_2 = b_2.
\]
Since the array $A_2$ is completely filled, there exists $(u,v) \in F(A_2)$ such that $\phi_{A_1,A_2}(u,v) = (a_1,a_2)$ and $\phi_{B_1,A_2}(u,v) = (b_1,a_2)$. Now, since $\phi_{2,A_1,A_2} \equiv \phi_{2,B_1,A_2}$, we have that for some $(x,y) \in F(A_2)$ it holds:
\[
	(x,y)=\phi_{2,A_1,A_2}(u,v) = \phi_{2,B_1,A_2}(u,v).
\]
Hence:
\begin{equation} \label{eq:proof_indexes}
\begin{aligned}
		\phi^{-1}_{A_1,A_2}(x,y) &= \phi^{-1}_{A_1,B_2} \phi_{2,A_1,B_2} (i,j), \text{ and} \\
		\phi^{-1}_{B_1,A_2}(x,y) &= \phi^{-1}_{B_1,B_2} \phi_{2,B_1,B_2} (i,j). \\
\end{aligned}		
\end{equation}
If $\phi_{2,A_1,B_2}(i,j) \neq \phi_{2,B_1,B_2}(i,j)$, then in particular $\phi_{2,A_1,B_2}(i,j)$ and $ \phi_{2,B_1,B_2}(i,j)$ have different column indexes (otherwise, by the definition of the bishop's move function, they would be the same cell). However, if we apply an horizontal move $s_{R,A}$ (resp. $s_{R,B}$) to both sides of the first (resp. second) equation in (\ref{eq:proof_indexes}), we can see that the column index must be $y$. We have then proved that $\phi_{2,A_1,B_2}(i,j) = \phi_{2,B_1,B_2}(i,j)$.
\end{proof}

\begin{rem}
If two arrays are $\phi$-equivalent with respect to $J_{1,n}$, where $\phi$ is the bishop's move function, then they are $\phi$-equivalent with respect to every $m \times n$ array, for every $m \geq 1$ (recall that we always assume that every row and every column of an array contains at least one filled cell).
\end{rem}

Let $I_1 \subset [1,m]$ and $I_2 \subset [1,n]$ be intervals. Then, given an $m \times n$ array $A$ such that $F(A) = \{[1,m] \times [1,n]\} \setminus\{ I_1\times I_2\}$, we say that $A$ is an \textit{$(m,n;I_1,I_2)$-holed array} and we denote it  as $A(m,n;I_1,I_2)$.  
Here, we only consider the case where $I_1 = [m-a+1,m]$ and $I_2$  is either $[1,b]$ or $[n-b+1,n]$ for some positive integers $a<m$ and $b<n$. For this choice of $I_1$, we denote $A$ by $A^{L}(m,n;a,b)$ when $I_2=[1,b]$ and by $A^{R}(m,n;a,b)$ when $I_2=[n-b+1,n]$, where $L$ and $R$ stand for \textit{left} and \textit{right}. In both cases the array has exactly $ab$ empty cells.
Note that the array $G$ defined in (\ref{eq:procedureD}) is nothing but an $A^{R}(t+1,h+ g;1,f)$.

\begin{lem}\label{lem:equiv_holed}
Given $a, b$ and $f$ positive integers with $f<\min\{a,b\}$,
let $A_1$ and $B_1$ be an $A^{L}(a,a+b;f,a)$ and an $A^{R}(b+f,a+b;f,b-f)$, respectively. Given a positive integer $d$, set $A_2:=J_{d,a+b}$. Let $\phi$ be the bishop's move function on $\frac{A_1}{A_2}$, and let $\sigma$ be the move function on $\frac{B_1}{A_2}$ so defined: $\sigma$ is the reversed bishop's move function on $B_1$ and the bishop's move function on $A_2$.
 
Then, $A_1$ and $B_1$ are $(\phi,\sigma)$-equivalent with respect to $A_2$.  
\end{lem}
\begin{proof}
Let $A_1, B_1$ and $A_2$ be as in the statement, and let $R_1=(s_1,\dotsc,s_{a+b})$ and $R_d=(u_1,\dotsc,u_{a+b})$ be the first and the last, that is the $d$-th, row of $A_2$, respectively. Since the array $A_2$ is completely filled, the maps $\phi_2$ and $\sigma_2$, defined in Equations (4.1) and (4.2) respectively, are bijections between $R_d$ and $R_1$, in particular:
\[
\phi_2(u_i) = \left\{
\begin{aligned}
	&s_{i+1+a-f}  \quad &\text{ for  $i \in [1, f-1] \cup[b+f, a+b] $,} \\
    	&s_{i+1+a}\quad &\text{  for $i \in [ f, b-1] $, } \\
	&s_{i+1+a-b} \quad&\text{ for $i \in [ b, b+f-1] $.} \\
\end{aligned}
\right.
\]
Similarly, for $\sigma_2(u_i)$:
\[
\sigma_2(u_i) = \left\{
\begin{aligned}
	&s_{i+1-b-f}  \quad &\text{ if $i \in [1, f-1] \cup[b+f, a+b] $,} \\
	&s_{i+1-b} \quad&\text{ for $i \in [ f, b-1] $,} \\
	&s_{i+1+a-b}\quad &\text{for $i \in [ b, b+f-1] $.} \\
\end{aligned}
\right.
\]
Since the indices must be read modulo $a+b$, it follows that $a \equiv -b$, so for each $i \in [1,a+b]$ it holds $\phi_2(u_i) = \sigma_2(u_i)$. 
\end{proof}

\begin{ex} 
Consider the parameters $a = 7$, $b=5$ and $f = 3$.
Here, we respectively show the arrays $A_1$ and $B_1$ of Lemma \ref{lem:equiv_holed}, together with the rows $R_1$ and $R_d$ of $A_2$. In order to better follow the move function we display the last row $R_d$ of $A_2$ as the first row of the arrays  $\frac{A_1}{A_2}$ and  $\frac{B_1}{A_2}$.
If a filled cell contains the number $i$, it means that it is in the orbit of the respective move function of the element $u_{i}$ of $R_d$:
\[
\begin{array}{|c||c|c|c|c|c|c|c|c|c|c|c|c|}\hline
\rightarrow&u_1&u_2&u_3&u_4&u_5&u_6&u_7&u_8&u_9&u_{10}&u_{11}&u_{12} \\ \hline \hline
\rightarrow&12&1&2&3&4&5&6&7&8&9&10&11 \\ \hline 
\rightarrow&11&12&1&2&3&4&5&6&7&8&9&10 \\ \hline 
\rightarrow&10&11&12&1&2&3&4&5&6&7&8&9 \\ \hline 
\rightarrow&9&10&11&12&1&2&3&4&5&6&7&8 \\ \hline 
\rightarrow&\cellcolor{gray}&\cellcolor{gray}&\cellcolor{gray}&\cellcolor{gray}&\cellcolor{gray}&\cellcolor{gray}&\cellcolor{gray}&3&4&5&6&7 \\ \hline 
\rightarrow&\cellcolor{gray}&\cellcolor{gray}&\cellcolor{gray}&\cellcolor{gray}&\cellcolor{gray}&\cellcolor{gray}&\cellcolor{gray}&7&3&4&5&6 \\ \hline 
\rightarrow&\cellcolor{gray}&\cellcolor{gray}&\cellcolor{gray}&\cellcolor{gray}&\cellcolor{gray}&\cellcolor{gray}\cellcolor{gray}&\cellcolor{gray}&6&7&3&4&5  \\ \hline  \hline
&8&9&10&11&12&1&2&5&6&7&3&4 \\ 
&s_1&s_2&s_3&s_4&s_5&s_6&s_7&s_8&s_9&s_{10}&s_{11}&s_{12} \\ \hline 
 & \vdots & \vdots & \vdots & \vdots & \vdots & \vdots & \vdots & \vdots & \vdots & \vdots & \vdots & \vdots   \\ \hline
\end{array}
\]

\[
\begin{array}{|c||c|c|c|c|c|c|c|c|c|c|c|c|}\hline
\rightarrow&u_1&u_2&u_3&u_4&u_5&u_6&u_7&u_8&u_9&u_{10}&u_{11}&u_{12} \\ \hline \hline
\leftarrow&12&1&2&3&4&5&6&7&8&9&10&11 \\ \hline 
\leftarrow&1&2&3&4&5&6&7&8&9&10&11&12 \\ \hline 
\leftarrow&2&3&4&5&6&7&8&9&10&11&12&1 \\ \hline 
\leftarrow&3&4&5&6&7&8&9&10&11&12&1&2 \\ \hline 
\leftarrow&4&5&6&7&8&9&10&11&12&1&2&3 \\ \hline 
\leftarrow&5&6&7&8&9&10&11&12&1&2&\cellcolor{gray}&\cellcolor{gray} \\ \hline 
\leftarrow&6&7&8&9&10&11&12&1&2&5&\cellcolor{gray}&\cellcolor{gray} \\ \hline
\leftarrow&7&8&9&10&11&12&1&2&5&6&\cellcolor{gray}&\cellcolor{gray} \\ \hline\hline
&8&9&10&11&12&1&2&5&6&7&3&4 \\ 
&s_1&s_2&s_3&s_4&s_5&s_6&s_7&s_8&s_9&s_{10}&s_{11}&s_{12} \\ \hline  
 & \vdots & \vdots & \vdots & \vdots & \vdots & \vdots & \vdots & \vdots & \vdots & \vdots & \vdots & \vdots   \\ \hline
\end{array}
\]
The $(\phi,\sigma)$-equivalence between $A_1$ and $B_1$ immediately follows.
\end{ex}

\begin{cor}\label{cor:equiv_compl_filled}
For every triple of strictly positive integers $(a,b,d)$ set  $A_1:= J_{a,a+b}$,  $B_1:=J_{b,a+b}$ and $A_2:= J_{d,a+b}$. Let $\phi$ be the bishop's move function on 
$\frac{A_1}{A_2}$, and let $\sigma$ be the move function on $\frac{B_1}{A_2}$ so defined: it is the reversed bishop's move function on $B_1$ and
the bishop's move function on $A_2$.

Then, $A_1$ and $B_1$ are $(\phi,\sigma)$-equivalent with respect to $A_2$.  
\end{cor}

\begin{ex}
Let $a = 7$ and $b = 5$, we show that $J_{7,12}$ and  $J_{5,12}$ are $(\phi,\sigma)$-equivalent with respect to $J_{d,12}$ for every $d\geq1$:
\[
\begin{array}{|c||c|c|c|c|c|c|c|c|c|c|c|c|}\hline
\rightarrow&u_1&u_2&u_3&u_4&u_5&u_6&u_7&u_8&u_9&u_{10}&u_{11}&u_{12}  \\ \hline \hline
\rightarrow&12&1&2&3&4&5&6&7&8&9&10&11 \\ \hline
\rightarrow&11&12&1&2&3&4&5&6&7&8&9&10\\ \hline
\rightarrow&10&11&12&1&2&3&4&5&6&7&8&9\\ \hline
\rightarrow&9&10&11&12&1&2&3&4&5&6&7&8\\ \hline
\rightarrow&8&9&10&11&12&1&2&3&4&5&6&7\\ \hline 
\rightarrow&7&8&9&10&11&12&1&2&3&4&5&6\\ \hline
 \rightarrow&6&7&8&9&10&11&12&1&2&3&4&5\\ \hline \hline
&5&6&7&8&9&10&11&12&1&2&3&4\\
&s_1&s_2&s_3&s_4&s_5&s_6&s_7&s_8&s_9&s_{10}&s_{11}&s_{12} \\ \hline 
 & \vdots & \vdots & \vdots & \vdots & \vdots & \vdots & \vdots & \vdots & \vdots & \vdots & \vdots & \vdots   \\ \hline
\end{array}
\]

\[
\begin{array}{|c||c|c|c|c|c|c|c|c|c|c|c|c|}\hline
\rightarrow&u_1&u_2&u_3&u_4&u_5&u_6&u_7&u_8&u_9&u_{10}&u_{11}&u_{12}  \\ \hline \hline
\leftarrow &12&1&2&3&4&5&6&7&8&9&10&11 \\ \hline
\leftarrow&1&2&3&4&5&6&7&8&9&10&11&12 \\ \hline 
\leftarrow&2&3&4&5&6&7&8&9&10&11&12&1 \\ \hline
\leftarrow&3&4&5&6&7&8&9&10&11&12&1&2 \\ \hline
\leftarrow&4&5&6&7&8&9&10&11&12&1&2&3 \\ \hline \hline
             &5&6&7&8&9&10&11&12&1&2&3&4 \\ 
&s_1&s_2&s_3&s_4&s_5&s_6&s_7&s_8&s_9&s_{10}&s_{11}&s_{12} \\\hline
 & \vdots & \vdots & \vdots & \vdots & \vdots & \vdots & \vdots & \vdots & \vdots & \vdots & \vdots & \vdots   \\ \hline
\end{array}
\]
\end{ex}

\begin{lem} \label{lemma:equiv1}
Let $G$ be an $A^{R}((t+1)h, h+g; h,f)$, where $t \geq 1$, $f<g$ and $f \leq h$. Then a bishop is a solution to $T(G)$ if and only if it is a solution to $T(G')$, where $G'$ is an $A^{L}(th,h+g; g-f,h)$.
\end{lem}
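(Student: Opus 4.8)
\emph{Overall strategy.} The plan is to reduce both tour problems to the combinatorics of a single completely-filled reference row and to show that a bishop induces the \emph{same} first-return permutation on that row whether the remaining cells are arranged as in $D$ or as in $D'$. By the Remark following the Proposition on bishop $\phi$-equivalence, being $\phi$-equivalent with respect to $J_{1,h+g}$ implies $\phi$-equivalence with respect to every $m\times(h+g)$ array, so it suffices to compare the induced permutations on one reference row $J_{1,h+g}$, and the tour property is governed by whether that permutation is a single $(h+g)$-cycle. I would then present $D$ and $D'$ as this common reference row capped by a ``corner'' region carrying the hole: for $D$ the corner is the south-east $h\times f$ hole together with the completely-filled rows above it, and for $D'$ it is the south-west $(g-f)\times h$ hole together with its filled rows.

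\emph{Mechanism.} The engine for comparing the two corners is Lemma~\ref{lem:equiv_holed}. Because $D$ is an $HA^{se}$ board while $D'$ is an $HA^{sw}$ board, I would first apply a horizontal reflection, which exchanges $HA^{se}\leftrightarrow HA^{sw}$ and turns a bishop into a reversed bishop; this is exactly the pairing of a bishop on an $A_1=HA^{sw}(a,a+b;f',a)$ with a reversed bishop on $B_1=HA^{se}(b+f',a+b;f',b-f')$ that Lemma~\ref{lem:equiv_holed} equates. Matching the column counts forces $a+b=h+g$, and matching the holes of the two corner regions fixes the remaining parameters. The feature that a single application of Lemma~\ref{lem:equiv_holed} cannot reproduce is the change in the \emph{height} of the hole, from $h$ rows in $D$ to $g-f$ rows in $D'$, together with the resulting $h$-row difference between the two boards; I would absorb this discrepancy by inserting Corollary~\ref{cor:equiv_compl_filled}, which swaps a completely-filled band for one of a different height at the cost of a further bishop/reversed-bishop switch, so that the two corners are built over genuinely the same reference row.

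\emph{Main obstacle and conclusion.} The hard part, as in the proof of Lemma~\ref{lem:equiv_holed} itself, is the modular bookkeeping: one must check that the bishop's excursion through the $D$-corner and its excursion through the $D'$-corner shift the column index of a reference cell by the \emph{same} amount modulo $h+g$, the analogue of the identity $a\equiv -b\pmod{a+b}$ that closed that proof. Simultaneously I would track the parity of the reflections and band-swaps so that the transient reversed bishop is converted back and the statement is genuinely about a \emph{bishop} on $D'$. I would also verify that the hypotheses of Lemma~\ref{lem:equiv_holed} and Corollary~\ref{cor:equiv_compl_filled} hold under the standing assumptions $f<g$ and $f\le h$, in particular that the intermediate completely-filled band is nonempty, isolating any small-$i$ boundary case for a direct check. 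Once the two first-return permutations are shown to coincide, one is a single $(h+g)$-cycle precisely when the other is, which is exactly the assertion that a bishop solves $P(D)$ if and only if it solves $P(D')$.
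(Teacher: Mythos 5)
Your overall framework---reducing everything to the bishop's first-return permutation on a common completely filled part, via the Remark on $\phi$-equivalence with respect to $J_{1,h+g}$---is the right one, but the engine you propose cannot run. Both Lemma \ref{lem:equiv_holed} and Corollary \ref{cor:equiv_compl_filled} exchange a bishop-traversed piece for a \emph{reversed}-bishop-traversed piece while keeping the standard bishop on the common part $A_2$; a global reflection, by contrast, reverses the move function \emph{everywhere}, including on $A_2$. Consequently, inside a fixed decomposition whose common part carries a bishop, the corner of $D$---an $HA^{se}$-shaped piece traversed by a \emph{bishop}---matches neither side of either tool (the $HA^{se}$ side of Lemma \ref{lem:equiv_holed} always carries the reversed bishop), and you cannot cancel the unwanted reversal by reflecting without flipping the move function on the reference part as well, which destroys the very equivalence you are trying to compose. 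There is also a hard parameter obstruction: Lemma \ref{lem:equiv_holed} relates two holes of the \emph{same} height ($f_0\times a$ versus $f_0\times(b-f_0)$), whereas the holes of $D$ and $D'$ are $h\times f$ and $(g-f)\times h$, essentially transposes of one another. Forcing $D$'s hole into the $A_1$ pattern requires $a=f$ and $f_0=h$, hence $h<\min\{a,b\}\leq f$, contradicting the standing hypothesis $f\leq h$; and Corollary \ref{cor:equiv_compl_filled} cannot absorb the discrepancy, since it only swaps completely filled bands and never alters a hole. In fact, the unique consistent instantiation of Lemma \ref{lem:equiv_holed} here takes $A_1$ to be the bottom $h$ rows of $D'$ (with $a=h$, $b=g$, $f_0=g-f$) and produces, together with Corollary \ref{cor:equiv_compl_filled}, the array $HA((i+1)g-f,h+g;g-f,f)$ of Theorem \ref{thm:eqv_array} with a reversed bishop: that is the \emph{next} step of the paper's chain (from $D'$ onward), not the step from $D$ to $D'$ that this lemma asserts.

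What the paper actually does for this lemma is a direct computation, and that computation is genuinely new content, not derivable from the other two statements. One takes $D_1$ to be the last $h+g-f+1$ rows of $D$ and $D_1'$ the last $g-f+1$ rows of $D'$, chosen precisely so that the complements in $D$ and $D'$ are the \emph{same} completely filled array, and then verifies---by a three-case computation of column indexes modulo $h+g$---that the bishop's first-return maps sending each cell of the last row of $D$ (resp.\ $D'$) to the cell it first reaches in the first row of $D$ (resp.\ $D'$) are the identical permutation; both pieces carry the standard bishop throughout, and no reversed bishop ever appears. To repair your write-up, replace the appeal to Lemma \ref{lem:equiv_holed} and Corollary \ref{cor:equiv_compl_filled} by this explicit comparison of the two first-return permutations.
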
 
\begin{proof}
Consider the subarray $G_1$ of $G$ containing its last $h+g-f+1$ rows, that is an $A^{R}(h+g-f+1,h+g; h,f)$, and the subarray $G_1'$ of $G'$ containing its last $g-f+1$ rows, that is a $A^{L}(g-f+1,h+g;g-f,h)$. Let $\phi$ be the bishop's move function, we show that $G_1$ and $G_1'$ are $\phi$-equivalent with respect to
$J := J_{th-g+f-1,h+g}$; note that 
$G=\frac{J}{G_1}$ and $G'=\frac{J}{G_1'}$.

Let $R_1 =(s_1, \dotsc, s_{h+g}) $ and $R_{(t+1)h} = (u_1,\dotsc, u_{h+g})$ be the first and the last row of $G$, respectively. We then have:
\[
\phi_2(u_i) = \left\{
\begin{aligned}
&s_{i+2}  \quad &\text{for $i\in[1,h-2] \cup [h+g-1,h+g]$,} \\ 
&s_{i+g-f+2}   \quad &\text{for $i\in [h-1, h+f-2]$,} \\
&s_{i+h+g-f+2} \quad &\text{for $i\in [h+f-1, h+g-2]$.} \\
\end{aligned}
\right.
\]

Let then $R_1' = (s'_1, \dotsc, s'_{h+g}) $ and $R_{th}' =(u_1',\dotsc, u_{h+g}') $  be the first and the last row of $G'$, respectively. We have:
\[
\phi_2(u_i') = \left\{
\begin{aligned}
&s_{i+2}'  \quad &\text{for  $i\in[1,h-2] \cup [h+g-1,h+g]$,} \\
&s_{i+g-f+2}' \quad &\text{ for $i\in [h-1, h+f-2]$, } \\
&s_{i-f+2}' \quad &\text{for $i\in [h+f-1, h+g-2]$.} \\
\end{aligned}
\right.
\]
Recalling that the arrays have $h+g$ columns, and indexes have to be read modulo $h+g$, we get that:
\[
\phi_2(u_i) = s_j \quad \Leftrightarrow \quad \phi_2(u_i')=s_j', 
\]
hence, $G_1$ and $G_1'$ are $\phi$-equivalent with respect to any completely filled array, thus the statement follows.
\end{proof}

\begin{ex}
Let  $t = 1$, $h = 7$, $g = 5$ and $f= 3$, then $G$ and $G'$ are an $A^{R}(14, 12; 7,3)$ and an $A^{L}(7,12; 2,7)$, respectively. Here, we show the subarrays $G_1$ and $G_1'$, that are respectively an $A^{R}(10,12;7,3)$ and an $A^{L}(3,12;2,7)$, together with $R_1,R_{(t+1)h}$ and $R_1',R_{th}'$:
\[
\begin{array}{|c|c|c|c|c|c|c|c|c|c|c|c|c|}\hline
&u_1&u_2&u_3&u_4&u_5&u_6&u_7&u_8&u_9&u_{10}&u_{11}&u_{12}  \\ \hline \hline
\rightarrow&12&1&2&3&4&5&6&7&8&9&10&11 \\ \hline 
\rightarrow&11&12&1&2&3&4&5&6&7&8&9&10 \\ \hline 
\rightarrow&10&11&12&1&2&3&4&5&6&7&8&9 \\ \hline 
\rightarrow&9&10&11&12&1&2&3&4&5&\cellcolor{gray}&\cellcolor{gray}&\cellcolor{gray} \\ \hline 
\rightarrow&5&9&10&11&12&1&2&3&4&\cellcolor{gray}&\cellcolor{gray}&\cellcolor{gray} \\ \hline 
\rightarrow&4&5&9&10&11&12&1&2&3&\cellcolor{gray}&\cellcolor{gray}&\cellcolor{gray} \\ \hline 
\rightarrow&3&4&5&9&10&11&12&1&2&\cellcolor{gray}&\cellcolor{gray}&\cellcolor{gray} \\ \hline 
\rightarrow&2&3&4&5&9&10&11&12&1&\cellcolor{gray}&\cellcolor{gray}&\cellcolor{gray} \\ \hline 
\rightarrow&1&2&3&4&5&9&10&11&12&\cellcolor{gray}&\cellcolor{gray}&\cellcolor{gray} \\ \hline 
\rightarrow&12&1&2&3&4&5&9&10&11&\cellcolor{gray}&\cellcolor{gray}&\cellcolor{gray} \\ \hline  \hline
&11&12&1&2&3&4&5&9&10&6&7&8\\ 
&s_1&s_2&s_3&s_4&s_5&s_6&s_7&s_8&s_9&s_{10}&s_{11}&s_{12} \\\hline 
  &\vdots & \vdots & \vdots & \vdots & \vdots & \vdots & \vdots & \vdots & \vdots & \vdots & \vdots & \vdots   \\ \hline
\end{array}
\]
\[
\begin{array}{|c|c|c|c|c|c|c|c|c|c|c|c|c|}\hline
&u_1'&u_2'&u_3'&u_4'&u_5'&u_6'&u_7'&u_8'&u_9'&u_{10}'&u_{11}'&u_{12}'   \\ \hline \hline
\rightarrow&12&1&2&3&4&5&6&7&8&9&10&11 \\ \hline 
\rightarrow&\cellcolor{gray}&\cellcolor{gray}&\cellcolor{gray}&\cellcolor{gray}&\cellcolor{gray}&\cellcolor{gray}& \cellcolor{gray} &6&7&8&9&10 \\ \hline
\rightarrow&\cellcolor{gray}&\cellcolor{gray}&\cellcolor{gray}&\cellcolor{gray}&\cellcolor{gray}&\cellcolor{gray}& \cellcolor{gray}&10 &6&7&8&9 \\ \hline
\hline
&11&12&1&2&3&4&5&9&10&6&7&8\\ &s_1'&s_2'&s_3'&s_4'&s_5'&s_6'&s_7'&s_8'&s_9'&s_{10}'&s_{11}'&s_{12}' \\\hline 
  &\vdots & \vdots & \vdots & \vdots & \vdots & \vdots & \vdots & \vdots & \vdots & \vdots & \vdots & \vdots   \\ \hline
\end{array}
\]
It can then be seen that $G_1$ and $G_1'$ are $\phi$-equivalent, where $\phi$ is the bishop's move function.
\end{ex}

\begin{lem}\label{lem:equivalenza2}
Let $G'$ and $G^*$ be an $A^L(th,h+g; g-f,h)$ and an $A^R((t+1)g-f,h+g; g-f,f)$, respectively, where $t\geq1$ and $h>g-f$. Then a bishop is a solution to $T(G')$ if and only if it is a solution to $T(G^*)$.
\end{lem}
\begin{proof}
Let $G_1'$ be the subarray of $G'$ containing its last $h$ rows, that is an $A^L(h,h+g;g-f,h)$; by Lemma \ref{lem:equiv_holed} a bishop in $G_1'$ is equivalent to a reversed bishop in an $A^R(2g-f,h+g;g-f,f)$ that we denote as $G_1^*$.

The remaining rows of the array $G'$ form a $J_{(t-1)h,h+g}$, say $G_2'$, and by Corollary \ref{cor:equiv_compl_filled} a bishop in each set of $h$ rows of $G_2'$ is equivalent to a reversed bishop in a $J_{g,h+g}$. Hence, overall, a bishop in $G_2'$ is equivalent to a reversed bishop in a $J_{(t-1)g,g+h}$, which we denote by $G_2^*$.

The array $\frac{G_2^*}{G_1^*}$ is an $A^R((t+1)g-f,h+g; g-f,f)$, which we denote by $G^*$, and the move function is a reversed bishop on $G^*$. Since clearly a reversed bishop is a solution to $T(G^*)$ if and only if a standard bishop solves $T(G^*)$, we have proven the statement.
\end{proof}

\begin{prop} \label{thm:eqv_array}
Let $G$ and $G^*$ be an $A^{R}(t+1,h+g;1,f)$ and an $A^R((t+1)g-f,h+g;g-f,f)$, respectively.
A $h$-knight is a solution to $T(G)$ if and only if a bishop is a solution of $T(G^*)$.
\end{prop}
\begin{proof}
     By Lemma \ref{lem:eq_h_knight_bishop}, a $h$-knight is a solution to $T(G)$ if and only if a bishop is a solution to $T(A^R)$, where $A^R=A^R((t+1)h,h+g;h,f)$ and, by Lemma \ref{lemma:equiv1}, this happens if and only if a bishop is a solution to $T(A^L)$ where $A^L=A^L(th,h+g;g-f,h)$. Finally, by Lemma \ref{lem:equivalenza2} this last condition is equivalent to a bishop's tour over $A^R((t+1)g-f,h+g;g-f,f)$.
\end{proof}

We conclude this section with another auxiliary lemma, which will be useful to get the main results:
\begin{lem} \label{lem:bishop_tour}
Let $A$ be an $A^R(m,n;a,b)$. If $\gcd(m,n) = a+b$ and $\gcd(a,b) = 1$, then a bishop is a solution to $T(A)$. 
\end{lem}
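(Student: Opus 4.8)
The plan is to show directly that the bishop permutation $\beta := N_1$ on $A$ is a single cycle of length $|F(A)| = mn-ab$. Set $g := \gcd(m,n) = a+b$; by cyclically shifting the columns (an automorphism of the toroidal array preserving all row and column successors) I may assume the hole sits in the south--east corner, i.e.\ $A = HA^{se}(m,n;a,b)$, so that rows $[1,m-a]$ and columns $[1,n-b]$ are completely filled. The key device is the \emph{diagonal invariant} $\delta(i,j) := (j-i) \bmod g$. On the completely filled torus the bishop acts as $(i,j)\mapsto(i+1,j+1)$, so $\delta$ is constant along orbits and there are exactly $g$ orbits, one per residue class of $\delta$, each of length $\lcm(m,n) = mn/g$. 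The whole argument is about how the hole perturbs this picture.

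First I would classify the moves of $\beta$ according to their effect on $\delta$. Away from the hole every step is an exact $(+1,+1)$ and fixes $\delta$; I call these \emph{normal}. A step fails to be normal precisely when the row successor jumps across the missing columns or the column successor jumps across the missing rows. A short case analysis shows that these \emph{special} steps are of exactly two kinds: the $a$ cells $(i,n-b)$ with $i\in[m-a+1,m]$ (where the row move wraps over the hole) and the $b$ cells $(m-a,j)$ with $j\in[n-b,n-1]$ (where the column move wraps over the hole), giving $a+b = g$ special steps in total. The point of the hypotheses is that both kinds shift the invariant by the \emph{same} amount: using $g\mid m$, $g\mid n$ and $a\equiv -b \pmod g$, every special step satisfies $\Delta\delta \equiv b \pmod g$. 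A direct computation of the $\delta$-values of the $g$ special sources then yields the $g$ consecutive residues $\{-b,-b+1,\dots,a-1\} \pmod g$, so that every residue class of $\delta$ contains exactly one special source.

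With this in hand the cycle count, which I would carry out last, is almost immediate. Around any cycle of $\beta$ the total change of $\delta$ is $0 \bmod g$, so if the cycle contains $s$ special steps then $sb \equiv 0 \pmod g$, and since $\gcd(b,g) = \gcd(a,b) = 1$ this forces $s \equiv 0 \pmod g$. It remains to rule out cycles with $s=0$: a cycle made of normal steps only would lie in a single class of $\delta$ and be closed under $(i,j)\mapsto(i+1,j+1)$ inside $F(A)$, hence would coincide with the full torus orbit of that class; but that orbit contains the unique special source of the class, whose outgoing step is special --- a contradiction. Therefore every cycle carries a positive multiple of $g$ special steps, and since there are exactly $g$ of them in total, $\beta$ has a single cycle, which is the desired tour.

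The main obstacle is the bookkeeping of the middle step: correctly enumerating the special steps and verifying the two congruences $\Delta\delta\equiv b \pmod g$ and $\{\delta(\text{special sources})\}=\Z_g$, which is precisely where the arithmetic conditions $\gcd(m,n)=a+b$ and $\gcd(a,b)=1$ are used. The ``no normal-only cycle'' step also deserves care: one must observe that a normal orbit, being closed under $(i,j)\mapsto(i+1,j+1)$ within $F(A)$, cannot avoid both the hole and the special source lying on its diagonal, which is exactly what the covering of all residues guarantees.
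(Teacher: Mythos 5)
Your proof is correct, and it reaches the conclusion by a genuinely different route from the paper's. The paper's proof is ``orbit-global'': it invokes the result of DeMaio and Faust \cite{DF} that the bishop decomposes the full torus $J_{m,n}$ into exactly $\gcd(m,n)=a+b$ orbits, places marker cells $c_1,\dotsc,c_{a+b}$ in row $m-a$, and computes the induced transition permutation $\eta$ on orbit indices ($\eta(i)=i-a$ for $i\geq a+1$ and $\eta(i)=i+b$ otherwise, i.e.\ addition of $b$ modulo $a+b$), concluding that $\eta$ is a single $(a+b)$-cycle because $\gcd(a,b)=1$; the tour is then assembled by following the bishop from one torus orbit to the next. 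You instead argue ``step-locally'' with the invariant $\delta(i,j)=(j-i)\bmod (a+b)$: you classify each move as an exact translation (preserving $\delta$) or one of exactly $a+b$ hole-crossing steps (each shifting $\delta$ by $b$, with sources meeting every residue class exactly once), and then finish by a conservation argument over all cycles at once: a cycle with $s$ special steps forces $sb\equiv 0\pmod{a+b}$, hence $(a+b)\mid s$ since $\gcd(b,a+b)=\gcd(a,b)=1$, and $s=0$ is impossible because a translation-only cycle would contain a whole diagonal class of the torus together with its unique special source. As only $a+b$ special steps exist, there is exactly one cycle. The arithmetic core is the same in both proofs (adding $b$ generates $\Z_{a+b}$ if and only if $\gcd(a,b)=1$), but the architectures differ, and each buys something: your version is self-contained --- it needs no citation of \cite{DF}, and it replaces the paper's implicit claim that inside $F(A)$ each torus orbit is traversed as one unbroken path between consecutive markers by the cleanly checkable ``no translation-only cycle'' step --- while the paper's version is more constructive, exhibiting explicitly the order in which the $a+b$ diagonal orbits are visited by the tour. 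Your bookkeeping claims (the enumeration of the $a+b$ special sources, $\Delta\delta\equiv b$ via $(a+b)\mid m$ and $(a+b)\mid n$, and the sources' $\delta$-values forming $a+b$ consecutive residues) are all verifiably correct, and both hypotheses $\gcd(m,n)=a+b$ and $\gcd(a,b)=1$ are used exactly where you say they are.
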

\begin{proof}
It has been proven in \cite{DF} that to cover each filled cell of $J_{m,n}$ once we need exactly the orbits of $\gcd(m,n) = a+b$ bishops.
Let then $\mathcal{B}= \{B_1, B_2,\dotsc,B_{a+b}\}$ be such a set of bishops. Note that if we fix $a+b$ consecutive cells in a row of $J_{m,n}$, then each cell belongs to the orbit of a different bishop. 

Let $(z_1, z_2,\dotsc, z_{a+b})$ be consecutive cells in the $(m-a)$-th row of $A$, so that $z_{a+b} = (m-a,n-1)$, and place the indexes of the bishops so that the cell $z_i$ belongs to the orbit of $B_i$. Let $\phi$ denote the bishop's move function, and define $\eta: [1,a+b] \rightarrow [1,a+b]$ to be the following map:
\[
 \eta(i) = j \Leftrightarrow \text{  $\phi^q(z_i)$ is in the orbit of $B_j$,}
\]
where $q$ is the minimum strictly positive integer such that $\phi^q(z_i)$ is either in the first column or in the first row of $A$.
It is not hard to verify that:
\[
\eta(i) = \left\{
\begin{aligned}
	i-a	& \text{ if $i-a\geq 1$,} \\
	i+b	& \text{ otherwise.}
\end{aligned}
\right.
\]
We now prove that $\eta$ is a permutation on $[1,a+b]$ of maximum length. We have that $\eta(1)=b+1$, and through successive applications of $\eta$ we cover all the elements in $[a,a+b]$ that are equal to $b+1$ modulo $a$; in particular, the last visited element that belongs to this congruence class is precisely  the residue class of $b+1 $ modulo $a$. Proceeding inductively, it follows that we cover all the elements that are equal to $\ell b+1$ modulo $a$, for $\ell \in[1,a]$; since $\gcd(a,b)=1$, this corresponds to the whole interval $[1,a+b]$. Thus, $\eta$ is a permutation of $[1,a+b]$ of maximum length.

This proves that a bishop is a solution to $T(A)$: indeed, if we start from the cell $z_i$, after $\phi^q$ steps (where $q$ is defined as before) we arrive in the orbit of the bishop $B_{\eta(i)}$. Then, we visit all the cells that belong to the orbit of $B_{\eta(i)}$, and we arrive at the cell labelled as $z_{\eta(i)}$. By repeating the iteration inductively, it immediately follows that the whole array $A$ is covered by the orbit of one bishop, hence the statement.
\end{proof}

\begin{ex}
Let $a = 3$ and $b=13$, and choose $m$ and $n$ such that $\gcd(m,n) = 16$; construct then the array $A$, that is an $A^{R}(m,n;3,13)$. Let $\mathcal{B} = \{B_1, \dotsc, B_{16}\}$ be a set of $16$ bishops such that they cover the completely filled array $J_{m,n}$. Let then $(z_1,\dotsc,z_{16})$ be consecutive cells in the $(m-a)$-th row of $A$ such that $z_{16} = (m-a,n-1)$. Here, we represent the cells of $A$ having row index in $\{1\} \cup [m-3,m]$ and column index in $\{1\} \cup [n-16,n]$, where the first row and the first column are display as the last one. Moreover, if a filled cell contains the number $i$, it means that it is contained in the orbit of the bishop $B_i$ in the array $J_{m,n}$ (while every filled cell belongs to the orbit of some bishop $B_i$, we only write the numbers in the cells that are relevant in the discussion):
\[
\small{
\begin{array}{|c|c|c|c|c|c|c|c|c|c|c|c|c|c|c|c|c|c|} \hline
\tikzmark{p}{$z_1$}&z_2&z_3&z_4&z_5&z_6&z_7&z_8&z_9&z_{10}&z_{11}&z_{12}&z_{13}&\tikzmark{d}{$z_{14}$}&\tikzmark{e}{$z_{15}$}&z_{16} && \\ \hline
&1&2&3&\cellcolor{gray}&\cellcolor{gray}&\cellcolor{gray}&\cellcolor{gray}&\cellcolor{gray}&\cellcolor{gray}&\cellcolor{gray}&\cellcolor{gray}&\cellcolor{gray}&\cellcolor{gray}&\cellcolor{gray}&\cellcolor{gray}&\cellcolor{gray}& \\ \hline
&&1&2&\cellcolor{gray}&\cellcolor{gray}&\cellcolor{gray}&\cellcolor{gray}&\cellcolor{gray}&\cellcolor{gray}&\cellcolor{gray}&\cellcolor{gray}&\cellcolor{gray}&\cellcolor{gray}&\cellcolor{gray}&\cellcolor{gray}&\cellcolor{gray}&16 \\ \hline
&&&\tikzmark{a}{1}&\cellcolor{gray}&\cellcolor{gray}&\cellcolor{gray}&\cellcolor{gray}&\cellcolor{gray}&\cellcolor{gray}&\cellcolor{gray}&\cellcolor{gray}&\cellcolor{gray}&\cellcolor{gray}&\cellcolor{gray}&\cellcolor{gray}&\cellcolor{gray}& \tikzmark{b}{15} \\ \hline
&&&&1&2&3&4&5&6&7&8&9&10&\tikzmark{f}{11} &12&13&\tikzmark{c}{14} \\ \hline
\end{array}
\link{a}{b}
\link{b}{c}
\link{d}{e}
\link{e}{f}
}
\]
We can then write the function $\eta$, that here we directly show as a permutation of $[1,16]$:
\[
\eta = (1,14,11,8,5,2,15,12,9,6,3,16,13,10,7,4).
\]
Hence, for instance, if we start from $z_1$, after applying $q=4$ times the bishop's move function, we land in a cell that belongs to the orbit of the bishop $B_{\eta(1)} = B_{14}$. From there, we move through every cell contained in the orbit of $B_{14}$, and we arrive in $z_{14}$. We then repeat the process with $B_{\eta(14)}=B_{11}$, and so on, thus visiting every filled cell of the array $A$.
\end{ex}

\section{Main results for cyclically diagonal arrays} \label{sec:new_sol}
In this section we use Propositions \ref{thm:main_eqv} and \ref{thm:eqv_array} to construct new solutions to the Crazy Knight's Tour Problem for cyclically diagonal square arrays. 
Since a solution to $P(A)$ for a cyclically $k$-diagonal array $A$ of order $n$ with $g = \gcd(n,k-1)=1$ has been established in  \cite{CDP}, in this section we implicitly assume that $g\ \geq 3$.

\begin{thm} \label{prop:case1}
Let  $A$ be a cyclically $k$-diagonal array of size $n$ and set $g = \gcd(n,k-1)$.
If $\gcd(g+1,k-1)=2$, then there exists a solution to $P(A)$.
\end{thm}
\begin{proof}
Choose $E\subset [1,n]$ such that $|E| = g+1$ and Conditions (\ref{eq:cond1}) and (\ref{eq:cond2}) are satisfied (note that this can always be done). 
Following  the notation of Proposition \ref{thm:main_eqv}, we have $h = k-2-g$ and $f =g-1$.  We may assume that $k\geq 9$, since for $k \in\{3,5,7\}$ the solution is already known from \cite{CDP}, hence $h \geq 3$. From Proposition \ref{thm:main_eqv} and Lemma \ref{lemma:equiv1}, we know that $E$ solves the Crazy Knight's Tour Problem if and only if a bishop is a solution to $T(G^*)$, where $G^*$ is an $A^{L}(h,h+g;1,h)$. 

Now, let $B$ be the subarray of $G^*$ containing its first $h$ columns, and  define  $Z = (z_1, \dotsc, z_h)$ and $Z' = (z_1', \dotsc, z_h')$ to be the $(h+1)$-th and the $(h+g)$-th columns of $G^*$, respectively. Finally, let $\phi$ be the bishop's move function on $G$, and $\phi_2$ be the map from $Z'$ to $Z$ such that $\phi_2(z_i') = z_j$,
where $z_j$ is the first cell of $Z$ that is visited by the move function after $z_i'$. 

It is easy to see that for each $i \in [1,h-2]$ we have $\phi_2(z_i') = z_{i+2}$, while $\phi_2(z_h') = z_1$ and $\phi_2(z_{h-1}') = z_2$. Hence, if $G^*_1$ is the array obtained from $G^*$ by replacing $B$ with an $A^{L}(h,1;1,1)$, a bishop is a solution of $T(G^*)$ if and only if it solves $T(G^*_1)$. We have that $G^*_1$ is an $A(g+1,k-2-g;1,1)$, hence the statement  follows from Lemma \ref{lem:bishop_tour}.
\end{proof}

\begin{ex}
Take  $n\equiv7\pmod{14}$, $n \geq 21$ and $k=15$. Then:
\[
g = \gcd(n,k-1) = 7 \qquad h=k-2-g =6  \qquad f =g-1=6.
\]
Note that $\gcd(g+1,k-1)=2$. We construct the array $G^*$, that is an $A^{L}(6,13;1,6)$. Moreover, we show the orbits of $z_2'$ and $z_5'$ under the action of the move function $\phi$, where the visited cells are marked with the corresponding index of $z_j'$:
\[
\begin{array}{|c|c|c|c|c|c||c|c|c|c|c|c|c|} \hline
5&&&2&&5&z_1&&&&&&z_1' \\ \hline
&5&&&2&&z_2&&&&&&z_2' \\ \hline
2&&5&&&2&z_3&&&&&&z_3' \\ \hline
&2&&5&&&z_4&&&&&&z_4' \\ \hline
&&2&&5&&z_5&&&&&&z_5' \\ \hline
\cellcolor{gray}&\cellcolor{gray}&\cellcolor{gray}&\cellcolor{gray}&\cellcolor{gray}&\cellcolor{gray}&z_6&&&&&&z_6' \\ \hline
\end{array}
\]
Hence, we have $\phi(z_2')=z_4$ and $\phi(z_5')=z_2$. It can be seen that we can equivalently consider the following array $G_1^*$, where we have replaced the first $h$ columns of $G^*$ with an $A^{L}(6,1;1,1)$:
\[
\begin{array}{|c||c|c|c|c|c|c|c|} \hline
5&z_1&&&&&&z_1' \\ \hline
&z_2&&&&&&z_2' \\ \hline
2&z_3&&&&&&z_3' \\ \hline
&z_4&&&&&&z_4' \\ \hline
&z_5&&&&&&z_5' \\ \hline
\cellcolor{gray}&z_6&&&&&&z_6' \\ \hline
\end{array}
\]
\end{ex}

\begin{cor} \label{cor:case1}
Let $n=mg$ be an odd integer and let $\ell\in [2,m-1]$ be an even integer such that
 $\gcd(g+1,\ell )=2$ and $\gcd(m,\ell)=1$.
Then there exists a solution to $P(A)$, where $A$ is a cyclically $(1+ \ell g)$-diagonal $n \times n$ array.
\end{cor}
\begin{proof}
Let  $k=1+ \ell g$, since $\gcd(m,\ell)=1$, $g$ is nothing but $\gcd(n,k-1)$.
We will prove that $\gcd(g+1,k-1)=2$, then the result will follow from Theorem \ref{prop:case1}.
Clearly $\gcd(g+1,k-1)=\gcd(g+1,k-2-g)$. Since $g$ is odd,  $g+1=2q$ for some positive integer $q$, then:
\[
k-2-g=\ell g - (g+1) = \ell (2q-1) - 2q = 2 \ell q - \ell - 2q.
\]
Hence $\gcd(g+1,k-1)=\gcd(2q, 2\ell q - \ell - 2q) = \gcd(2q,\ell)=\gcd(g+1,\ell)$ and this is $2$ by hypothesis.
\end{proof}

\begin{thm} \label{prop:case2}
Let $A$ be a cyclically $k$-diagonal array of order $n$, with $n$ and $k$ odd, and set $g=\gcd(n,k-1)$.
If there exists $E \subset[1,n]$ satisfying Conditions (\ref{eq:cond1}) and (\ref{eq:cond2}) with $|E|= \frac{k-1}{2} + \frac{g-1}{2}$, then there exists a solution to $P(A)$.
\end{thm}
\begin{proof}
Let $E$ be as in the statement. By Propositions \ref{thm:main_eqv} and \ref{thm:eqv_array} we have that a solution to $P(A)$ exists if and only if a bishop is a solution to $T(G^*)$, where $G^*$ is an $A^{R}(\frac{k-1}{2} + \frac{g-1}{2}, \frac{k-1}{2} + \frac{g+1}{2}; \frac{g-1}{2}, \frac{g+1}{2})$. Let $B$ be the subarray of $G^*$ containing its first $\frac{k-1}{2}$ columns. Let $Z =(z_1, \dotsc, z_{\frac{k-1}{2}})$ and $Z' =(z_1', \dotsc, z_{\frac{k-1}{2}}')$ be the ordered sequence of the cells of the $(\frac{k+1}{2})$-th column and the $(\frac{k-1}{2} + \frac{g+1}{2})$-th column  of $G^*$, respectively. 

 Let $\phi$ be the bishop's move function on $G^*$, and let $\phi_2$ be the map from $Z'$ to $Z$ such that
$\phi_2(z_i') = z_j$, where $z_j$ is the first cell of $Z$ that is visited by the move function after $z_i'$. 

We then distinguish between three different cases:
\begin{itemize}
	\item if $i \in[1,\frac{g-3}{2}]$, then we need to apply $2 \frac{k-1}{2}+1 = k$ times $\phi$, hence the element $z_i'$ is mapped by $\phi_2$ to the element $z_j$, where $j = i+2-g$;
	\item if $i = \frac{g-1}{2}$, then we apply $k$ times the function $\phi$; however, we have to consider that we need to remove $\frac{k-1}{2}$, hence the element $z_i'$ is mapped by $\phi_2$ to $z_1$;
    	\item if $i \in[\frac{g+1}{2}, \frac{k-1}{2}]$, in order to arrive to $Z$ we need to apply $\frac{k+1}{2}$ times the function $\phi$. Since the column $Z$ has empty cells, the element $z_i'$ is mapped by $\phi_2$ to the element $z_j$, where $j = i+ \frac{k+1}{2}-\frac{g-1}{2}$. 
\end{itemize}
Now, if we apply from $Z$ the bishop's move function $\frac{g+1}{2}$ times, we arrive again at the column $Z'$; it is not hard to see that
$ \phi^{\frac{g+1}{2}} 	\phi_2(z_i') = z_{i+1}'$  for every $i \in [1,\frac{k-1}{2}]$.

Note that this is exactly equivalent to a bishop's move function on  $J_{\frac{k-1}{2},1}$, that is the column $Z$. This proves that a bishop is a solution to $T(G^*)$ if and only if it is a solution to $T(Z)$, and it is trivial to see that this is always the case. 
\end{proof}

\begin{ex}
Let $n\equiv 7\pmod {14}$, $n \geq 35$, and $k = 29$, then $g = 7$ and $G^*$ is an $A^{R}(17,18;3,4)$. Here, we show the array $G^*$ and we write the elements in the $15$-th and the $18$-th column. Moreover, we show the orbits of $z_1'$, $z_3'$ and $z_9'$ under the action of the move function $\phi$, where the visited cells are marked with the corresponding index of $z_j'$:
\[
\begin{array}{|c|c|c|c|c|c|c|c|c|c|c|c|c|c||c|c|c|c|} \hline
3&&1&&&&&&9&&&&&&z_1&&&z_1' \\ \hline
1&3&&1&&&&&&9&&&&&z_2&&&z_2' \\ \hline
&1&3&&1&&&&&&9&&&&z_3&&&z_3'\\ \hline
3&&1&3&&1&&&&&&9&&&z_4&&&z_4' \\ \hline
&3&&1&3&&1&&&&&&9&&z_5&&& z_5'\\ \hline
&&3&&1&3&&1&&&&&&9&z_6&&&z_6' \\ \hline
&&&3&&1&3&&1&&&&&&z_7&&&z_7' \\ \hline
&&&&3&&1&3&&1&&&&&z_8&*&&z_8' \\ \hline
&&&&&3&&1&3&&1&&&&z_9&&*&z_9' \\ \hline
9&&&&&&3&&1&3&&1&&&z_{10}&&&z_{10}' \\ \hline
&9&&&&&&3&&1&3&&1&&z_{11}&&&z_{11}' \\ \hline
&&9&&&&&&3&&1&3&&1&z_{12}&&&z_{12}' \\ \hline
&&&9&&&&&&3&&1&3&&z_{13}&&&z_{13}' \\ \hline
&&&&9&&&&&&3&&1&3&z_{14}&&&z_{14}' \\ \hline \hline
&&&&&9&&&&&&3&&1&\cellcolor{gray}&\cellcolor{gray}&\cellcolor{gray}& \cellcolor{gray}\\ \hline
1&&&&&&9&&&&&&3&&\cellcolor{gray}&\cellcolor{gray}&\cellcolor{gray}& \cellcolor{gray}\\ \hline
&1&&&&&&9&&&&&&3&\cellcolor{gray}&\cellcolor{gray}&\cellcolor{gray}& \cellcolor{gray}\\ \hline
\end{array}
\]
We then have:
\[
\phi_2(z_1')=z_{13} \quad \phi_2(z_3')=z_1 \quad \phi_2(z_9')=z_7.
\]
Note also that if we apply  the bishop's move function $3$ times to $z_j$, for $j \in \{1,7,13\}$, we obtain $z_{i+1}$, where the index $i$ is such that $\phi_2(z_i')=z_j$; we have shown this in the previous array with the cells marked with $*$, starting from $z_7$, and it can be seen that we arrive in $z_{10}'$, that is $z_{9+1}'$.

Hence we have that, for every odd integer $n\equiv 0\pmod 7$, $n\geq35$, there exists a solution to $P(A)$ where $A$ is cyclically $29$-diagonal array of order $n$.
\end{ex}

\begin{cor} \label{cor:case2}
Let $n\geq9$ and  $g$ be odd integers with $n$ divisible by $g$.
Let $A$ be a cyclically $k$-diagonal  $n \times n$ array, where $k=n-g+1$.
Then there exists a solution to $P(A)$.
\end{cor}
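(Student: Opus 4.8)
The plan is to deduce the statement directly from Proposition \ref{prop:case2} by exhibiting one explicit set $E$ of the prescribed cardinality. Since $n = mg$ and $k-1 = (m-1)g$, I would first record that $g = \gcd(n,k-1)$ and that the parameters of Equation (\ref{eq:param_dim}) are exactly $m$ together with $\ell = m-1$; in particular $\gcd(m,\ell) = \gcd(m,m-1) = 1$, so the whole framework of Section \ref{sec:equivalenza} applies. The target size in Proposition \ref{prop:case2} is $\frac{k-1}{2} + \frac{g-1}{2}$, which under $k-1 = (m-1)g$ collapses to $\frac{(m-1)g + (g-1)}{2} = \frac{mg-1}{2} = \frac{n-1}{2}$, an integer because $m$ and $g$ are both odd. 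Thus it suffices to produce an $E \subset [1,n]$ with $|E| = \frac{n-1}{2}$ satisfying Conditions (\ref{eq:cond1}) and (\ref{eq:cond2}).

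The key observation is that with $\ell = m-1$ one has $-\ell \equiv 1 \pmod{m}$, so the permutation $\theta$ of (\ref{eq:procedureTheta}) is the identity $(1,2,\dotsc,m)$ and $\Theta = (I_1,I_2,\dotsc,I_m)$ is merely the cyclic shift of the intervals $I_j = [(j-1)g+1,\,jg]$. Consequently the $\Theta$-ordering of the intervals agrees with the natural ordering of the integers, and Condition (\ref{eq:cond1}) holds automatically for any $E$ that is a union of initial intervals together with an initial subinterval of the next one. This is exactly what removes, in this parameter regime, the hypothesis that is genuinely restrictive in general.

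I would then take $E = \{1,2,\dotsc,\frac{n-1}{2}\} = [1,(i+1)g - f]$ with $i = \frac{m-1}{2}$ and $f = \frac{g+1}{2}$, and verify that these lie in the admissible ranges $i \in [1,\ell-1]$ and $f \in [1,g-1]$: both reduce to $m \geq 3$ and $g \geq 3$. A short computation gives $(i+1)g - f = \frac{(m+1)g}{2} - \frac{g+1}{2} = \frac{mg-1}{2} = \frac{n-1}{2}$, so $E$ has the required cardinality and has precisely the shape $E = (I_1 \cup \dotsb \cup I_{i+1}) \setminus [(i+1)g-f+1,(i+1)g]$ demanded by Condition (\ref{eq:cond2}). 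Finally I would confirm the minimality clause: since $I_1 \cup \dotsb \cup I_i = [1,\tfrac{n-g}{2}]$ and $g \geq 3$ forces $\frac{n-1}{2} > \frac{n-g}{2}$, the set $E$ genuinely meets $I_{i+1}$, so $\{I_1,\dotsc,I_{i+1}\}$ is indeed the minimal family of intervals covering $E$.

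Once all hypotheses of Proposition \ref{prop:case2} are checked, the existence of a solution to $P(A)$ is immediate. I do not expect a real obstacle in this argument: the substance of the problem has already been absorbed into Proposition \ref{prop:case2} and the chain of reductions ending in Theorem \ref{thm:eqv_array} and Lemma \ref{lem:bishop_tour}. The only point needing care is the bookkeeping translating the identities $n = mg$ and $k = 1 + (m-1)g$ into the integrality and range statements $i = \frac{m-1}{2} \in [1,m-2]$ and $f = \frac{g+1}{2} \in [1,g-1]$, each of which rests on the parity of $m$ and $g$ and on $m,g \geq 3$.
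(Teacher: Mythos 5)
Your proposal is correct and follows essentially the same route as the paper: both exploit that $\ell = m-1$ makes $\Theta = (I_1,\dotsc,I_m)$ the natural cyclic ordering, choose $E = [1,\tfrac{n-1}{2}]$ (the paper writes it as $\bigcup_{i=1}^{(m+1)/2} I_i$ minus its last $\tfrac{g+1}{2}$ elements, which is the same set, with $i=\tfrac{m-1}{2}$ and $f=\tfrac{g+1}{2}$), and invoke Proposition \ref{prop:case2}. Your write-up merely makes explicit the range and integrality checks that the paper dismisses as ``easy to see.''
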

\begin{proof}
Note that, by the hypotheses, $g=\gcd(n,k-1)$.
Set $n = mg$ and $\ell = m-1$, then $k-1=\ell g$.  
Now we construct the permutation $\Theta$ of the intervals $I_1,\dotsc,I_m$ as defined in (3.2): since $-\ell \equiv 1 \pmod{m}$, we simply get $\Theta =(I_1,I_2,\dotsc,I_m)$. 
Finally set $E =( \bigcup_{i=1}^{\frac{m+1}{2}}I_i) \setminus [\frac{m+1}{2}-\frac{g-1}{2}, \frac{m+1}{2}]$. It is easy to see that $E$ satisfies Conditions (\ref{eq:cond1}) and (\ref{eq:cond2}), and moreover $|E| = \frac{k-1}{2} + \frac{g-1}{2}$. By Theorem \ref{prop:case2}, we conclude that there exists a solution to $P(A)$.
\end{proof}

\section{Conclusions}
The results presented in Section \ref{sec:new_sol} allows to prove that Conjecture \ref{conj:crazy_tour} holds for many infinite values of $n$ and $k$ as pointed out also in the following remarks.

\begin{rem} \label{rem:prime_times_2}
   From Corollary \ref{cor:case1} we can construct  infinite families of cyclically diagonal arrays that admit a solution to the Crazy Knight's Tour Problem. For example, if $g \equiv 1 \pmod{4}$ then there exists a solution to $P(A)$ for any cyclically $(1+2^q g)$-diagonal $n\times n$ array, where $q\geq 1 $ and $n \geq 1+2^q g$ odd.
\end{rem}

\begin{rem}
We can give a heuristic estimate on the number of cases solved by Theorem \ref{prop:case1}. Indeed, let $k$ be an odd integer and $g$ be an odd divisor of $k-1$, hence $k-1=2rg$ for some positive integer $r$. Let $s =\frac{g+1}{2}$, the condition $\gcd(g+1,k-1)=2$ becomes:
\[
\gcd(g+1,k-1)=2 \Leftrightarrow \gcd(2s,2r(2s-1))=2 \Leftrightarrow \gcd(s,r)=1.
\]
The correspondence  $g=2s-1$ and $k=2r(2s-1)+1$ is a bijection between positive integer pairs $(r,s)$ and admissible pairs $(k,g)$. Hence, from the well known  asymptotic probability $6/\pi^2$ that two randomly chosen numbers $r,s$  are coprime, the asymptotic proportion of pairs $(k,g)$ satisfying the condition of Theorem \ref{prop:case1} is $6/\pi^2\approx 60.79\%$. From each of these pairs we may then
choose any odd $n\equiv 0 \pmod{g}$ such that $g = \gcd(n,k-1)$, and find a solution to $P(A)$, where $A$ is a cyclically $k$-diagonal $n\times n$ array.
\end{rem}

From Theorem \ref{prop:case1} we may also completely solve the existence problem of solutions to the Crazy Knight's Tour Problem on cyclically $k$-diagonal arrays for some values of $k$, by directly checking the divisors of $k-1$. Indeed, if the conditions of Theorem \ref{prop:case1} are satisfied for every divisor $g$ of $k-1$, then a solution to the tour problem always exists.
From this consideration and some results of  \cite{CDP}, we deduce that for
\begin{equation} \label{eq:new_values_k}
 k \leq 245, \ k \not \in \{205,   211,   217,   221,   225,   229,   237,   241\}
\end{equation}
there exists a solution to the Crazy Knight's Tour Problem for every cyclically $k$-diagonal  array of order $n$, satisfying the necessary conditions $n\geq k \geq 3$ and $n,k$ odd.

We conclude this section showing how we can combine the results presented in Section \ref{sec:new_sol}
with Theorem \ref{prop:sol_ext} to get new solutions to $P(A)$.

\begin{prop} \label{prop:finale}
    Let $g\geq 3$ odd and let $ k\equiv 1 \pmod{g}$. There exists a solution to $P(A)$ for every  cyclically $k$-diagonal array  $A$ of order $n' \equiv g \pmod{k-1}$, $n'\geq k-1+g$.
\end{prop}
\begin{proof}
Let $g, k$ be as in the statement and let $n = k-1+g$. Since $g = \gcd(n,k-1)$, from Corollary \ref{cor:case2} there exists a solution to $P(A)$, where  $A$ is a   cyclically $k$-diagonal array  of order $n$. Since this solution has $C = (1,\dotsc, 1)$, from Theorem \ref{prop:sol_ext}  we may extend this solution to every cyclically $k$-diagonal array of order $n'=n+\lambda(k-1)$, $\lambda \geq 1$, hence the statement.
\end{proof}
Note that if in the previous result we moreover require that $k \equiv 1 \pmod{g(g+1)}$, then $\gcd(g+1,k-1)=g+1\neq 2$ and the constructed solution cannot be found by applying Theorem \ref{prop:case1}; this means that we actually have new solutions to the Crazy Knight's Tour Problem.
Indeed, in the following table we give some values of $k$  excluded from  (\ref{eq:new_values_k}) obtainable from Proposition \ref{prop:finale}, with their relative choices of $g$ and $n$.
\[
\begin{array}{c|c|l}
    k & g  & n\\ \hline
    205 & 3 & 3 \pmod{204} \\
    211 & 5& 5 \pmod{210}\\
    217 & 3& 3 \pmod{216}\\
    225 & 7& 7 \pmod{224} \\
    229 & 3& 3 \pmod{228} \\
    241 & 3,5,15& 3,5,15 \pmod{240} \\
\end{array}
\]

\section*{Acknowledgements}
 The authors would like to thank Simone Costa for the interesting discussion on the topic.

\end{document}